\theoremstyle{plain}
\newtheorem{thm}{Theorem}[section]
\newtheorem{theorem}[thm]{Theorem}
\newtheorem{lemma}[thm]{Lemma}
\newtheorem{corollary}[thm]{Corollary}
\newtheorem{proposition}[thm]{Proposition}
\theoremstyle{definition}
\newtheorem{notation}[thm]{Notation}
\newtheorem{definition}[thm]{Definition}
\newtheorem{example}[thm]{Example}
\newtheorem{question}[thm]{Question}
\numberwithin{equation}{section}
\newcommand{\btheta}{{\Theta}}
\newcommand{\bw}{{\bf w}}
\newcommand{\bi}{{\bf i}}
\newcommand{\p}{\partial}
\newcommand{\dd}{{\check{\rm d}}}
\newcommand{\sC}{{\mathcal C}}
\newcommand{\sF}{{\mathcal F}}
\newcommand{\sK}{{\mathcal K}}
\newcommand{\sL}{{\mathcal L}}
\newcommand{\sN}{{\mathcal N}}
\newcommand{\sO}{{\mathcal O}}
\newcommand{\sS}{{\mathcal S}}
\newcommand{\sW}{{\mathcal W}}
\newcommand{\C}{{\mathbb C}}
\newcommand{\BH}{{\mathbb H}}
\newcommand{\I}{{\mathbb I}}
\newcommand{\BP}{{\mathbb P}}
\newcommand{\W}{{\mathbb W}}
\def\Sym{\mathop{\rm Sym}\nolimits}
\def\Hom{\mathop{\rm Hom}\nolimits}
\title[Unbendable rational curves and contact structures]{Varieties of minimal rational tangents of unbendable rational curves subordinate to contact structures}
\author{Jun-Muk Hwang}
\begin{document}

\begin{abstract}
A nonsingular rational curve $C$ in a complex manifold $X$ whose normal bundle is isomorphic to $$\sO_{\BP^1}(1)^{\oplus p} \oplus  \sO_{\BP^1}^{\oplus q}$$ for some nonnegative integers $p$ and  $q$ is called an unbendable rational curve on $X$.
Associated with it is
the variety of minimal rational tangents (VMRT) at a point $x \in C,$ which is the germ of submanifolds  $\sC^C_x \subset \BP T_x X$
consisting of tangent directions of small deformations of $C$ fixing $x$.
Assuming that there exists a distribution $D \subset TX$ such that all small deformations of $C$ are tangent to $D$, one asks what kind of submanifolds of projective space can be realized as the VMRT $\sC^C_x \subset \BP D_x$.
When $D \subset TX$ is a contact distribution, a well-known  necessary condition is   that $\sC_x^C$ should be Legendrian with respect to the induced contact structure on $\BP D_x$.
We prove that this is also a sufficient condition: we construct a complex manifold $X$ with a contact structure $D \subset TX$ and an unbendable rational curve $C \subset X$ such that  all small deformations of $C$ are tangent to $D$ and the VMRT $\sC^C_x \subset \BP D_x$ at some point $x\in C$ is projectively isomorphic to an arbitrarily given Legendrian submanifold. Our construction uses the geometry of contact lines on the Heisenberg group and a technical ingredient is
the symplectic geometry of distributions the study of which has originated from geometric control theory.
\end{abstract}

\maketitle

\medskip
MSC2010: 58A30, 32C25, 14L40

\section{Introduction}

A nonsingular rational curve $C$ on a complex manifold $X$ of dimension $n$ is said to be unbendable if its normal bundle is isomorphic to
\begin{equation}\label{e.p} \sO_{\BP^1}(1)^{\oplus p} \oplus  \sO_{\BP^1}^{\oplus (n-1-p)}\end{equation} for some nonnegative integer $p$. From the type of the normal bundle, deformations of $C$ in $X$ are unobstructed and all small deformations of $C$ are unbendable rational curves. In terms of the Douady space ${\rm Douady}(X)$ of $X$, we can find an open neighborhood $\sK^C$ (or the germ ) of the point $[C]\in {\rm Douady}(X)$ corresponding to $C \subset X$ such that $\sK^C$ is smooth of dimension $$ \dim H^0(\BP^1, \sO_{\BP^1}(1)^{\oplus p} \oplus  \sO_{\BP^1}^{\oplus (n-1-p)}) = n-1+p$$ and any curve in $X$ corresponding to a point of $\sK^C$ is an unbendable rational curve. If furthermore, there exists a distribution $D \subset TX$ on $X$ such that the curve in $X$ corresponding to any point of $\sK^C$ is tangent to $D$,  we say that $C$ is subordinate to the distribution $D \subset TX$.

Unbendable rational curves  arise naturally in the study of  uniruled projective manifolds  in algebraic geometry, where they were sometimes called standard rational curves (e.g. Section 4.1 in \cite{HM98} or Definition 4.16 in \cite{Hw12}) and there are many examples subordinate to nontrivial distributions (e.g. Examples 1.4.6 and 1.4.7 in \cite{Hw01}).

An important invariant of an unbendable rational curve $C \subset X$ is its variety of minimal rational tangents (VMRT) at a point $x \in C$. This was originally defined for minimal rational curves in algebraic geometry, but the same idea works in the complex-analytic setting if we consider germs of submanifolds in the projectivized tangent space as follows.

\begin{definition}\label{d.vmrt}
Let $C \subset X$ be an unbendable rational curve  and let $\sK^C$ be an open neighborhood of $[C]$ in the Douady space of $X$ considered above. After shrinking $\sK^C$ if necessary, we can assume that for any point $x \in C$, the subvariety $\sK^C_x \subset \sK^C$ of deformations of $C$ fixing the point $x$ is smooth of dimension $p$ and the subset $\sC_x^C \subset \BP T_x X$ consisting of the tangent directions to curves in $X$ belonging to $\sK_x^C$ is a locally closed complex submanifold of dimension $p$ in $\BP T_x X$. This submanifold $\sC^C_x$ is called the {\em VMRT} (standing for Varieties of Minimal Rational Tangents) of $C$ at $x$. By abuse of terminology, the VMRT at $x$ also refers to the   germ of $\sC_x^C$ at the point $\BP T_x C \in \sC_x^C,$ which is determined independently from the choice of $\sK^C$.
\end{definition}

  The projective equivalence class of the VMRT as a submanifold of the projective space is an invariant of $C \subset X$, in the sense that if there exists an unbendable rational curve $\tilde{C} \subset \tilde{X}$ with a biholomorphic map $\varphi: X \to \tilde{X}$ satisfying $\varphi(C) = \tilde{C}$ and $\varphi(x) = \tilde{x}$, then there exists a projective isomorphism $\BP T_x X \cong \BP T_{\tilde{x}}\tilde{X}$ sending $\sC^C_x$ to $\sC^{\tilde{C}}_{\tilde{x}}.$
For this reason, the projective geometric properties of the VMRT play important roles in the study of unbendable rational curves.

We are interested in the projective geometric nature of  locally closed submanifolds in projective space that arise as the VMRT's of  unbendable rational curves subordinate to distributions. If an unbendable rational curve $C \subset X$ is subordinate to a distribution $D \subset TX$, then there is an obvious inclusion $\sC^C_x \subset \BP D_x$. Is there any restriction on  a germ of submanifolds in $\BP^{r-1}$ that can be realized as the VMRT $\sC^C_x \subset \BP D_x$ at some point $x$ of an unbendable rational curve $C \subset X$ subordinate to a distribution $D \subset TX$ of rank $r$?
The following  restriction was discovered in \cite{HM98}.  Recall (see Definition \ref{d.Levi}) that for a distribution $D \subset TX$ on a complex manifold $X$ and a point $x \in X$, there is a naturally defined homomorphism
$${\rm Levi}^D_x: \wedge^2 D_x \to T_x X/D_x$$ induced by the Lie brackets of local sections of $D$ and called the Levi tensor of $D$ at $x$.
A projective line in $\BP D_x$ determines a 2-dimensional subspace in $D_x$, hence a 1-dimensional subspace in $\wedge^2 D_x$.  It was proved in Proposition 10 in \cite{HM98} that the 1-dimensional subspaces of $\wedge^2 D_x$ corresponding to projective lines in $\BP D_x$ tangent to the VMRT $\sC^C_x$ of an unbendable rational curve $C$ subordinate to $D$ are annihilated by ${\rm Levi}^D_x$. This is a  restriction on the projective geometry of the VMRT $\sC^C_x$, which is a nontrivial condition if  the Levi tensor is nonzero. One can find many interesting consequences of this condition in \cite{HM98} and \cite{Hw01}.

 Is there any other restriction? No additional restriction is  known, which motivates the following question,  Problem 6.3 in \cite{Hw12}.

\begin{question}\label{q.1}
Let $V$ be a vector space of dimension $r$ and let $S \subset \BP V$ be  a locally closed submanifold of dimension $p>0$  such that lines tangent to $S$ span a linear subspace $V^S \subset \wedge^2 V$ of codimension $c\geq 0$. Is there a triple $(X, D, C)$ consisting of a complex manifold $X$ of dimension $\geq r+c$, a distribution $D \subset TX$ of rank $r$ and an unbendable rational curve $C \subset X$ with $p$ as in (\ref{e.p}) such that
\begin{itemize} \item[(1)]  $C$ is subordinate to $D$;
\item[(2)] there exists a projective isomorphism $\varphi:  \BP D_x \to \BP V$ for some point $x \in C$ satisfying $\varphi (\sC^C_x) \subset S$; and
\item[(3)] the isomorphism $\wedge^2 D_x \to \wedge^2 V$ induced by $\varphi$ in (2) sends the kernel of the Levi tensor $${\rm Levi}_x^D: \wedge^2 D_x \to T_x X/D_x$$ isomorphically to $V^S \subset \wedge^2 V$? \end{itemize} \end{question}

When $c=0$, the answer is easy.  In fact, regarding $\BP V$ as a hyperplane in $\BP^r$, let $X$ be the blowup of $\BP^r$ along $S$. Then the proper image $C \subset X$ of a projective line $\BP^1 \subset \BP^r$  intersecting both $S \subset \BP V \subset \BP^r$ and $\BP^r \setminus \BP V$ becomes an unbendable rational curve on $X$ (see Example 5.6 in \cite{Hw12}) whose VMRT at a point over $\BP^r \setminus \BP V$ is isomorphic to the germ of $S$ at $\BP^1 \cap S $.  Then $(X, D, C)$ satisfies (2) in Question \ref{q.1}, while (1) and (3) are automatic in this case.

There has been little progress on Question \ref{q.1} when $c>0$.  The above mentioned construction of the blowup of $\BP^r$ when $c=0$ can be regarded as a partial completion of the vector group $V$ acting transitively on $\BP^r \setminus \BP V.$ So we expect that an answer to Question \ref{q.1} may be found as a partial completion of the nilpotent group associated with the 2-step graded Lie algebra $V \oplus \wedge^2 V / V^S$. But it is not clear how to construct such a partial completion  generalizing the blowup construction of the case  $c = 0$.

In this paper, we work on the simplest unknown case of $c=1$ and $\dim X = r+1$ in Question \ref{q.1}, assuming furthermore that the hyperplane  $V^S \subset\wedge^2 V$  is given by a nondegenerate antisymmetric form $\omega$ on $V$. In this case, the dimension of $V$ is an even integer, to be denoted by $2m>0$.  Then the affine cone $\hat{S} \subset V$ of $S$ is an isotropic subvariety of the symplectic vector space $(V, \omega)$.
If a triple $(X, D, C)$ satisfies the three conditions of Question \ref{q.1}, then $D$ must be a contact structure on $X$. In terms of the line bundle $L := TX /D$,  the anti-canonical bundle $K^{-1}_X = \det TX$ must be isomorphic to $L^{\otimes (m+1)}$ (e.g. by (2.2) in \cite{LB}).
It follows that $C \cdot L =1$ and $\dim \hat{S} = p+1 = m$. This means that $\hat{S}$ must be a Lagrangian subvariety of the symplectic vector space $(V, \omega)$, or equivalently, the submanifold $S$ is a Legendrian submanifold of $\BP V$ equipped with the natural contact structure on $\BP V$ induced by $\omega$ (see Example \ref{e.contact} and Definition \ref{d.Legendrian}).
 We give an affirmative answer to Question \ref{q.1} in this case as follows.

\begin{theorem}\label{t.main}
Let $(V, \omega)$ be a symplectic vector space and let $S \subset \BP V$ be a locally closed submanifold which is Legendrian with respect to the contact structure on $\BP V$ induced by $\omega$. Then we have a complex manifold $X$ equipped with a contact structure $D \subset TX$ and an unbendable rational curve $C \subset X$ subordinate to $D$ such that for some point $x \in C$, there exists a projective isomorphism $\varphi:\BP D_x \to \BP V$ sending $\sC^C_x$ to an open subset of  $S.$
\end{theorem}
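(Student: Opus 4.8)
The plan is to realize the prescribed form $\omega$ as the Levi tensor of the left-invariant contact structure on the Heisenberg group, and to produce $C$ as a suitable completion of a "contact line" on this group, mimicking in the contact category the blow-up construction that settles the case $c=0$. Write $\dim V = 2m$ and let $H$ be the Heisenberg group whose Lie algebra is $\fh = V \oplus \C z$ with $z$ central and $[u,v] = \omega(u,v)\,z$ for $u,v \in V$. The left translates of $V \subset \fh$ define a left-invariant distribution $D_H \subset TH$ of rank $2m$ whose Levi tensor, under the trivialization by left translation, is exactly $\omega$ with values in the center $Z = \exp(\C z)$. Thus $(H, D_H)$ is the model contact manifold carrying $(V,\omega)$ as its infinitesimal data, and the identity $e \in H$ will play the role of $x$, with the identification $\BP D_H|_e = \BP V$.

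Next I would single out the contact lines. For $v \in V$ the one-parameter subgroup $t \mapsto \exp(tv)$ is tangent to $D_H$, and its left translates $g\,\exp(\C v)$ are horizontal curves of constant tangent direction $[v]\in \BP V$ in the left trivialization; these are the contact lines. The contact lines through $e$ are parametrized by $\BP V$, and those whose direction lies on the Lagrangian cone $\hat S \subset V$ form the $(m-1)$-dimensional subfamily whose tangent directions at $e$ sweep out $S$. The task is to complete these affine curves to nonsingular rational curves inside a complex manifold carrying an extension of $D_H$, in such a way that the directions become identified with a copy of $S$ in a boundary locus. Concretely, I would construct a partial completion $X_0 \supset H$ to which $D_H$ extends as a contact structure $D$ and in which each contact line $\exp(\C v)$ closes up to a rational curve meeting a boundary divisor at a point depending only on $[v]$; the locus of such limit points contains a copy of $\BP V$, inside which $S$ embeds. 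Modifying $X_0$ along the closure of $S$ — the contact analogue of blowing up $S \subset \BP V \subset \BP^r$ — produces the manifold $X$, and the proper transform of a generic contact line with direction on $\hat S$ is the desired curve $C$.

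Finally I would verify the three required properties using the symplectic geometry of the distribution $D$, that is, the canonical symplectic structure on the annihilator $D^\perp \subset T^*X$ and its characteristic (abnormal) curves, a circle of ideas imported from geometric control theory. The goal is to show: (a) $C$ is unbendable with normal bundle $\sO_{\BP^1}(1)^{\oplus(m-1)} \oplus \sO_{\BP^1}^{\oplus(m+1)}$, obtained by relating $N_{C/X}$ to a symplectic reduction of $D$ along $C$ governed by $\omega$; (b) every small deformation of $C$ is again a contact line, hence tangent to $D$, so that $C$ is subordinate to $D$ — here the characterization of contact lines through the symplectic geometry of $D$ is essential; and (c) the deformations of $C$ fixing $x=e$ are precisely the contact lines through $e$ with direction on $\hat S$, so that their projectivized tangents fill an open subset of $S$ and furnish the projective isomorphism $\varphi:\BP D_x \to \BP V$ with $\varphi(\sC^C_x)\subset S$ open.

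The main obstacle is the second step: the abelian completion of a vector group by $\BP^r$ and its blow-up along $S$ does not transcribe verbatim to the nonabelian group $H$, and one must build by hand a completion in which the contact structure extends, the contact lines become smooth rational curves, and — most delicately — the modification along $S$ forces the normal bundle to be exactly $\sO_{\BP^1}(1)^{\oplus(m-1)} \oplus \sO_{\BP^1}^{\oplus(m+1)}$ while keeping all nearby deformations tangent to $D$. Controlling this interplay between the completion, the normal bundle, and tangency to $D$ is exactly where the symplectic geometry of the distribution is needed, and it is the crux of the argument.
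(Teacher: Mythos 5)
You have correctly identified the stage --- the Heisenberg group $\BH$ with its left-invariant contact structure $\sW$, the contact lines $g\exp(\C v)$, and the subfamily of $S$-lines whose directions sweep out the Legendrian cone --- and the right toolbox, namely the symplectic geometry of $D^{\perp}\subset T^{\vee}X$ coming from control theory. This matches the paper's setting. But the heart of the proof is missing: your second step defers the entire construction to a hypothetical partial completion $X_0\supset\BH$ carrying a boundary copy of $\BP V$, followed by a ``contact analogue of blowing up $S$.'' This is precisely the step the paper singles out as \emph{not} known how to perform (the introduction states that it is not clear how to construct such a partial completion generalizing the blow-up construction of the case $c=0$), and you yourself flag it as ``the main obstacle'' and ``the crux.'' As written, your items (a)--(c) verify properties of an object that has not been built, so the argument has a genuine gap at its central point. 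A further sign that the blow-up picture is not the right one: in the paper's construction $X$ is \emph{not} a completion of $\BH$ --- only a neighborhood $U$ of the single chosen line $\ell$ embeds into $X$, $C\setminus U$ is one point, and the VMRT is controlled at all points of $C$ except possibly one special point, none of which is visible from a global boundary-divisor picture.

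The paper's actual route sidesteps any completion of $\BH$. It works on the $(3m-1)$-dimensional space $\sL^S$ of $S$-lines: the incidence variety $\sS$ of tangent directions of $S$-lines carries a corank-$2$ distribution $R\subset\mu^{-1}\sW$ cut out by ${\rm d}\btheta({\rm d}\mu(\cdot),\alpha)=0$, which descends to a corank-$2$ distribution $Q$ on $\sL^S$ (Proposition \ref{p.Q}). One then passes to the projectivized annihilator $\BP Q^{\perp}\to\sL^S$, whose fibers are \emph{already} projective lines; the canonical $\sO(1)$-valued form $\vartheta^{\sL^S}|_{\BP Q^{\perp}}$ embeds $\sS$ into $\BP Q^{\perp}$ as the complement of a section, so each affine contact line closes up for free inside a fiber. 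The manifold $X$ is the quotient of a neighborhood of one such fiber by the Cauchy characteristic foliation of this form (Lemma \ref{l.reduction}), $C$ is the image of that fiber, and the null-space dimension count of Propositions \ref{p.Q} and \ref{p.R} (proved by explicit coordinates on $\BH$) is what gives $\dim X=2m+1$ and, via $K_X^{-1}\cong L^{\otimes(m+1)}$ together with Lemma \ref{l.char}, forces $N_{C/X}\cong\sO_{\BP^1}(1)^{\oplus(m-1)}\oplus\sO_{\BP^1}^{\oplus(m+1)}$. To make your outline into a proof you would need to replace the ``blow-up of a completion'' step by a concrete mechanism of this kind.
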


In addition to Question \ref{q.1}, Theorem \ref{t.main} is motivated by the well-known problem whether any Fano contact manifold should be the twistor space of a quaternionic-K\"ahler manifold (\cite{LB}).
To approach  this problem via the theory of VMRT, we expect that a neighborhood of $C \subset X$ constructed in Theorem \ref{t.main} should serve as the model geometry.

When $S \subset \BP V$ is a certain quasi-homogeneous Legendrian submanifold determined by a cubic form, a construction
of $C \subset X$ and $D \subset TX$ as in Theorem \ref{t.main} was given in \cite{HwMa}. The method of \cite{HwMa} cannot be extended to a general Legendrian submanifold $S \subset \BP V$ and we use a  completely different argument here.  Our $X$ in Theorem \ref{t.main} is  a partial completion of the Heisenberg group along a chosen contact line (see Theorem \ref{t.precise} for a precise statement). The VMRT $\sC^C_x \subset \BP D_x$ is isomorphic to the germ of $S \subset \BP V$ at the direction of the chosen contact line for all $x \in C$,  except possibly for a single special point in $C$.  We employ the contact geometry of a natural distribution on the space of contact lines on the Heisenberg group (see Section \ref{s.Heis} for details). This method is inspired by the works \cite{AS} and \cite{Z99} in control theory on the symplectic geometry of distributions, part of which we review in Section \ref{s.dist}.
Our construction is explained in Section \ref{s.Heis} modulo some technical computations. The latter is carried out in Section \ref{s.coordi} by employing a suitable coordinate system on the Heisenberg group.

\section{Preliminaries on symplectic geometry of cotangent bundles}\label{s.cotan}
We recall some basic definitions and results on the natural symplectic form on the cotangent bundle of a complex manifold.

\begin{definition}\label{d.form}
Let $L$ be a line bundle on a complex manifold $M$.

 \begin{itemize} \item[(1)] A homomorphism $\vartheta: TM \to L$ is called an $L$-{\em valued form}. We say that $\vartheta$ is {\em nowhere vanishing} if the homomorphism $TM \to L$ is surjective. \item[(2)] For a nowhere vanishing $L$-valued form $\vartheta$ on $M$, define a homomorphism $$\dd_x \vartheta: \wedge^2 {\rm Ker}(\vartheta_x) \to L_x$$ of the fibers of vector bundles at $x \in M$ as follows. Choose a section $l$ of $L$  in a neighborhood $O$ of $x$ with $l_x  \neq 0$ such that $\vartheta|_O =\check{\vartheta} \otimes l $ for some 1-form $\check{\vartheta}$ on $O$. Define
 $$ \dd_x \vartheta (u, v) := {\rm d} \check{\vartheta} (u, v) \cdot l_x$$ for $u, v \in {\rm Ker}(\vartheta_x)\subset T_x M.$ It is easy to see that this definition does not depend on the choice of $l.$
 \item[(3)] In (2), define ${\rm Null}(\dd_x \vartheta) \subset {\rm Ker}(\vartheta_x)$ by
     $${\rm Null}(\dd_x \vartheta) := \{ v \in {\rm Ker}(\vartheta_x), \dd_x \vartheta (u, v) = 0 \mbox{ for all } u \in {\rm Ker}(\vartheta_x).\}.$$
 \item[(4)] In (3), we say that $\vartheta$ is a {\em contact form} on $M$ if ${\rm Null}(\dd_x \vartheta) =0$ for all $x \in M$. The distribution ${\rm Ker}(\vartheta) \subset TM$ of corank 1 determined by a contact form is called a {\em contact structure} on $M$. \end{itemize} \end{definition}

\begin{example}\label{e.contact}
Let $V$ be a vector space with a symplectic form $\omega: \wedge^2 V \to \C$.
The projectivization $\BP V$ is the set of $1$-dimensional subspaces in $V$.
Denote by $[v] \in \BP V$ the point corresponding to a nonzero vector $v \in V$.
There is a natural identification of the tangent space $$T_{[v]} \BP V  \ = \ \Hom (\C v, V/\C v).$$
Let $L$ be the line bundle $\sO_{\BP V}(2)$ on $\BP V$ such that
 the fiber of $L$ at $[v] \in \BP V$ with $v \in V\setminus \{0\}$ is $\Sym^2 (\C v)^{\vee}$. Define $\vartheta: T \BP V \to L$ by
sending $h \in T_{[v]}\BP V = \Hom (\C v, V/\C v)$ to $\vartheta(h) \in \Sym^2(\C v)^{\vee}$ defined by $$ \vartheta(h) (v) = \omega(h(v), v).$$ This is a contact form on $\BP V$ (as in Example 2.1 of \cite{LB}) defining a contact structure on $\BP V$ naturally induced by $\omega.$ \end{example}

The following lemma is straightforward.

\begin{lemma}\label{l.pullback}
Let $\vartheta$ be a nowhere vanishing $L$-valued 1-form on a complex manifold $M$ as in Definition \ref{d.form}. Let  $\psi: Y \to M$ be a submersion from a complex manifold $Y$ and let $\psi^*\vartheta: TY \to \psi^*L$ be the pull-back of $\vartheta$. Then
$${\rm Null}(\dd_y (\psi^* \vartheta)) = ({\rm d}_y \psi)^{-1}({\rm Null}(\dd_x \vartheta))$$ for any $y \in \psi^{-1}(x) \subset Y$, where ${\rm d}_y \psi: T_y Y \to T_{\psi(y)} M$ is the differential of $\psi$ at $y$. \end{lemma}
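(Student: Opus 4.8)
The plan is to reduce everything to a local computation via a trivialization of $L$, after which the statement becomes the elementary fact that the exterior derivative commutes with pullback, combined with the surjectivity of the differential of a submersion. First I would fix $x\in M$ and $y\in\psi^{-1}(x)$, choose a section $l$ of $L$ near $x$ with $l_x\neq 0$, and write $\vartheta=\check\vartheta\otimes l$ for a genuine $1$-form $\check\vartheta$ as in Definition \ref{d.form}. Since $l_x\neq 0$, the pulled-back section $\psi^*l$ is nonvanishing near $y$ and trivializes $\psi^*L$, giving $\psi^*\vartheta=(\psi^*\check\vartheta)\otimes(\psi^*l)$. This lets me compute $\dd_y(\psi^*\vartheta)$ from $\psi^*\check\vartheta$, using the canonical identification $(\psi^*L)_y\cong L_x$ under which $(\psi^*l)_y$ corresponds to $l_x$.

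The computation proceeds in two observations. The kernel identity $\Ker((\psi^*\vartheta)_y)=({\rm d}_y\psi)^{-1}(\Ker(\vartheta_x))$ is immediate from $(\psi^*\check\vartheta)_y(w)=\check\vartheta_x({\rm d}_y\psi(w))$, and holds for any smooth map. Then, using naturality of the exterior derivative, ${\rm d}(\psi^*\check\vartheta)=\psi^*({\rm d}\check\vartheta)$, I obtain for $w_1,w_2\in\Ker((\psi^*\vartheta)_y)$ the formula
$$\dd_y(\psi^*\vartheta)(w_1,w_2)=\dd_x\vartheta\big({\rm d}_y\psi(w_1),\,{\rm d}_y\psi(w_2)\big)$$
under the identification $(\psi^*L)_y\cong L_x$; the arguments on the right lie in $\Ker(\vartheta_x)$ by the kernel identity, so the right-hand side is defined.

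It then remains to transfer the null-space condition through ${\rm d}_y\psi$, which is the only place where the submersion hypothesis is used. Because $\psi$ is a submersion, ${\rm d}_y\psi$ is surjective onto $T_xM$, so it carries $\Ker((\psi^*\vartheta)_y)=({\rm d}_y\psi)^{-1}(\Ker(\vartheta_x))$ onto all of $\Ker(\vartheta_x)$. Consequently, for $w\in\Ker((\psi^*\vartheta)_y)$ with $v:={\rm d}_y\psi(w)$, the condition $\dd_y(\psi^*\vartheta)(w',w)=0$ for all $w'\in\Ker((\psi^*\vartheta)_y)$ is, by the displayed formula, equivalent to $\dd_x\vartheta(u,v)=0$ for all $u\in\Ker(\vartheta_x)$, i.e. to $v\in\Null(\dd_x\vartheta)$. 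Since $\Null(\dd_x\vartheta)\subset\Ker(\vartheta_x)$, the membership $w\in\Ker((\psi^*\vartheta)_y)$ is automatic once $v\in\Null(\dd_x\vartheta)$, and we conclude $\Null(\dd_y(\psi^*\vartheta))=({\rm d}_y\psi)^{-1}(\Null(\dd_x\vartheta))$.

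There is no genuine obstacle here, as the lemma is routine; the one point deserving care is recognizing that surjectivity of ${\rm d}_y\psi$ is exactly what eliminates the universal quantifier over $w'$ in the definition of $\Null$. Without the submersion hypothesis one would obtain only the inclusion $({\rm d}_y\psi)^{-1}(\Null(\dd_x\vartheta))\subset\Null(\dd_y(\psi^*\vartheta))$, since then ${\rm d}_y\psi(\Ker((\psi^*\vartheta)_y))$ need not exhaust $\Ker(\vartheta_x)$.
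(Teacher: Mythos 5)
Your argument is correct and is exactly the routine verification that the paper omits (it simply declares the lemma straightforward): trivialize $L$ by $\psi^*l$, use $\mathrm{d}(\psi^*\check\vartheta)=\psi^*(\mathrm{d}\check\vartheta)$, and use surjectivity of $\mathrm{d}_y\psi$ to see that the quantifier over $\Ker((\psi^*\vartheta)_y)$ translates into the quantifier over $\Ker(\vartheta_x)$. The only tiny point you leave implicit is that $\psi^*\vartheta$ is again nowhere vanishing (so that $\dd_y(\psi^*\vartheta)$ is defined), which follows immediately from the same surjectivity.
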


\begin{notation}\label{n.cotangent}
Let $M$ be a complex manifold. \begin{itemize}
\item[(1)] For a vector  bundle $W$ on $M$, its projectivization $\BP W$ is the space of 1-dimensional subspaces in the fibers of $W \to M$. There is a natural line bundle $\sO_{\BP W}(1)$ on $\BP W$, whose fiber at the point $[w] \in \BP W$ corresponding to a nonzero vector $w \in W$ is $$\sO_{\BP W}(1)_{[w]} := \Hom (\C w, \C).$$
\item[(2)] Let $\pi_M: T^{\vee}M \to M$ be the cotangent bundle of $M$. We have a canonical 1-form $\theta^M$ on $T^{\vee}M$ which sends a tangent vector $v \in T_{\alpha}(T^{\vee}M)$ at $\alpha \in T^{\vee} M$ to
$$\theta^M(v) := \alpha ({\rm d} \pi_M (v)).$$
\item[(3)]
On the projectivization  $\varpi_M: \BP T^{\vee}M \to M$ of the cotangent bundle,  there exists a natural $\sO_{\BP T^{\vee}M}(1)$-valued 1-form $\vartheta^M$ which sends a tangent vector $u \in T_{[\alpha]}(\BP T^{\vee} M)$ with $0 \neq \alpha \in T^{\vee} M$ to $$\vartheta^M(u) := \alpha({\rm d} \varpi_M (u))
\cdot \alpha^{\vee}$$ where $\alpha^{\vee}$ is the element of $$\sO_{\BP T^{\vee}M}(1)_{[\alpha]} := \Hom (\C \alpha, \C)$$ satisfying $\alpha^{\vee}(\alpha) =1$. This definition does not depend on the choice of $\alpha$ representing the point $[\alpha]$. \end{itemize} It is well-known (e.g. Example 2.2 in \cite{LB}) that ${\rm d} \theta^M$ is a symplectic form on $T^{\vee}M$ and $\vartheta^M$ is a contact form on $\BP T^{\vee} M.$  \end{notation}

The following is from Proposition 11.14 of \cite{AS}.

\begin{proposition}\label{p.AS}
In Notation \ref{n.cotangent}, a vector field $\vec{v}$ on $M$ determines a function $h_{\vec{v}}$ on $T^{\vee} M$. Since ${\rm d} \theta^M$ is a symplectic form on $T^{\vee}M$, there is a unique vector field $\vec{h}_{\vec{v}}$ on $T^{\vee} M$ satisfying $${\rm d} \theta^M(\vec{h}_{\vec{v}}, \cdot) = - {\rm d} h_{\vec{v}}.$$
For a point $\alpha \in T_x^{\vee} M,$ let $\{\varphi_t, t \in \Delta\}$ be the 1-parameter group-germ of local biholomorphisms in a neighborhood of $x$ in $M$ generated by $\vec{v}$ for a suitable neighborhood $\Delta \subset \C$ of $0 \in \C$ and let $\{ \Phi_t, t \in \Delta\}$ be the 1-parameter group-germ of local biholomorphisms in a neighborhood of $\alpha$ in $T^{\vee} M$ generated by $\vec{h}_{\vec{v}}$. Then $$\varphi_t^* \beta = \Phi_{-t}(\beta)$$ for any $\beta \in T^{\vee}M$ close to $\alpha$ and $t \in \Delta$ close to $0$ such that both sides are well-defined. \end{proposition}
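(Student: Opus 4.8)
The plan is to recognize the flow $\Phi_t$ of $\vec h_{\vec v}$ as the \emph{cotangent lift} of $\varphi_t$ and to prove the identity infinitesimally, by matching generating vector fields and invoking uniqueness of local flows. Recall that $h_{\vec v}$ is the fibrewise-linear momentum (the symbol of $\vec v$), $h_{\vec v}(\alpha) = \alpha(\vec v_{\pi_M(\alpha)})$ for $\alpha\in T^{\vee}M$. For a local biholomorphism $\psi$ of $M$, define its cotangent lift $\widehat\psi$ on $T^{\vee}M$ by pushing covectors forward, $\widehat\psi(\alpha) := (\psi^{-1})^*\alpha = \alpha\circ{\rm d}(\psi^{-1})$, which lies in $T^{\vee}_{\psi(\pi_M(\alpha))}M$; thus $\pi_M\circ\widehat\psi = \psi\circ\pi_M$, and $\widehat{\psi_1}\circ\widehat{\psi_2} = \widehat{\psi_1\circ\psi_2}$, so that $\{\widehat{\varphi_t}\}$ is a one-parameter group-germ. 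I will show that $\{\widehat{\varphi_t}\}$ is generated by $\vec h_{\vec v}$; granting this, uniqueness of the flow of $\vec h_{\vec v}$ gives $\Phi_t = \widehat{\varphi_t}$, and then $\Phi_{-t}(\beta) = \widehat{\varphi_{-t}}(\beta) = (\varphi_t)^*\beta = \varphi_t^*\beta$, which is the assertion.

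The key step is the classical fact that the cotangent lift preserves the canonical $1$-form: $\widehat\psi^*\theta^M = \theta^M$ for every local biholomorphism $\psi$. I would check this pointwise. For $w\in T_\alpha(T^{\vee}M)$, the relation $\pi_M\circ\widehat\psi = \psi\circ\pi_M$ gives ${\rm d}\pi_M({\rm d}\widehat\psi(w)) = {\rm d}\psi({\rm d}\pi_M(w))$, so by the definition of $\theta^M$ at the point $\widehat\psi(\alpha)$,
$$(\widehat\psi^*\theta^M)_\alpha(w) = \widehat\psi(\alpha)\big({\rm d}\psi({\rm d}\pi_M(w))\big) = \big(\alpha\circ{\rm d}(\psi^{-1})\big)\big({\rm d}\psi({\rm d}\pi_M(w))\big).$$
Since ${\rm d}(\psi^{-1})\circ{\rm d}\psi = \mathrm{id}$, the right-hand side collapses to $\alpha({\rm d}\pi_M(w)) = \theta^M_\alpha(w)$. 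Taking $\psi=\varphi_t$ gives $(\widehat{\varphi_t})^*\theta^M = \theta^M$ for all small $t$.

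Let $\vec V$ be the infinitesimal generator of $\{\widehat{\varphi_t}\}$. Differentiating $(\widehat{\varphi_t})^*\theta^M = \theta^M$ at $t=0$ yields $\mathcal{L}_{\vec V}\theta^M = 0$, and Cartan's formula $\mathcal{L}_{\vec V}\theta^M = \iota_{\vec V}{\rm d}\theta^M + {\rm d}(\iota_{\vec V}\theta^M)$ gives ${\rm d}\theta^M(\vec V,\cdot) = -{\rm d}(\theta^M(\vec V))$. It remains to identify $\theta^M(\vec V)$ with $h_{\vec v}$. From $\pi_M\circ\widehat{\varphi_t} = \varphi_t\circ\pi_M$ we get ${\rm d}\pi_M(\vec V_\alpha) = \tfrac{{\rm d}}{{\rm d}t}\big|_{t=0}\varphi_t(\pi_M(\alpha)) = \vec v_{\pi_M(\alpha)}$, hence $\theta^M(\vec V_\alpha) = \alpha({\rm d}\pi_M(\vec V_\alpha)) = \alpha(\vec v_{\pi_M(\alpha)}) = h_{\vec v}(\alpha)$. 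Therefore ${\rm d}\theta^M(\vec V,\cdot) = -{\rm d}h_{\vec v}$, which is exactly the equation defining $\vec h_{\vec v}$; by its uniqueness, $\vec V = \vec h_{\vec v}$, completing the argument.

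The computations are routine once the conventions are fixed, so I expect the only genuine difficulty to be bookkeeping. One must pin down the direction of the cotangent lift — pushforward of covectors by $\psi$, equivalently pullback by $\psi^{-1}$ — and the attendant signs, so that the generator comes out as $+\vec h_{\vec v}$ and the $-t$ on the right of $\varphi_t^*\beta = \Phi_{-t}(\beta)$ is correctly accounted for. One must also keep track of domains: $\varphi_t$, $\Phi_t$ and $\widehat{\varphi_t}$ exist only as group-germs on suitable neighborhoods of $x$ and $\alpha$ for $t$ near $0$, so the stated equality is asserted precisely on the locus where both sides are defined.
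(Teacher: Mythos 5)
Your proof is correct: the identification of $h_{\vec v}$ as the tautological momentum function, the verification that the cotangent lift $\widehat{\varphi_t}$ preserves $\theta^M$, the Cartan-formula computation showing its generator satisfies ${\rm d}\theta^M(\vec V,\cdot)=-{\rm d}h_{\vec v}$, and the sign bookkeeping in $\Phi_{-t}(\beta)=\widehat{\varphi_{-t}}(\beta)=\varphi_t^*\beta$ are all sound. The paper itself gives no proof of this statement, citing it as Proposition 11.14 of \cite{AS}; your argument is the standard one found there, so there is nothing further to compare.
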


The following lemma is straightforward.

\begin{lemma}\label{l.sigma}
Let $M$ be a complex manifold and let $\theta$ be  a nowhere vanishing holomorphic 1-form on $M$. Define a section   $\sigma: M \to \BP T^{\vee} M$  of $\varpi_M$ in Notation \ref{n.cotangent} (3)  by $\sigma(x) = [\theta_x]$ for each $x \in M$.
Let $\tau$ be the  section of $\sO_{\BP T^{\vee}M}(1)|_{\sigma(M)}$ uniquely determined by the condition $$
\tau ([\theta_x]) \in \sO_{\BP T^{\vee}M}(1)_{[\theta_x]} = \Hom (\C \theta_x, \C)$$ sends $\theta_x$ to $1$.  Write  $\vartheta^M|_{\sigma(M)} = \check{\theta} \otimes \tau$ for some 1-form $\check{\theta}$ on   $\sigma(M).$ Then $\check{\theta}$ coincides with $\theta$ under the natural identification $\sigma(M) \cong M$ given by $\varpi_M$.  \end{lemma}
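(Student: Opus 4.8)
The plan is to unwind the definitions and check the claimed equality of $1$-forms pointwise, testing against an arbitrary tangent vector. The structural fact I would exploit at the outset is that $\sigma$ is a section of $\varpi_M$, so that $\varpi_M \circ \sigma = \mathrm{id}_M$ and the ``natural identification $\sigma(M) \cong M$ given by $\varpi_M$'' is precisely the biholomorphism $\varpi_M|_{\sigma(M)}$ with inverse $\sigma$. Under this identification, asserting that $\check{\theta}$ coincides with $\theta$ amounts to proving the equality of $1$-forms $\sigma^* \check{\theta} = \theta$ on $M$. Thus the whole lemma reduces to computing $\check{\theta}({\rm d}_x\sigma(w))$ for a point $x \in M$ and a tangent vector $w \in T_x M$, and showing it equals $\theta_x(w)$.

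The computation I would carry out is the following. Fix $x$ and $w$, and set $u := {\rm d}_x \sigma(w) \in T_{[\theta_x]}(\BP T^{\vee} M)$. Applying the differential of $\varpi_M$ and using the chain rule together with $\varpi_M \circ \sigma = \mathrm{id}_M$ gives
$${\rm d} \varpi_M(u) = {\rm d}(\varpi_M \circ \sigma)(w) = w.$$
Now I evaluate the tautological form $\vartheta^M$ of Notation \ref{n.cotangent}(3) at $u$, choosing the representative $\alpha = \theta_x$ of the point $[\theta_x] = \sigma(x)$. With this choice the element $\alpha^{\vee} \in \Hom(\C\theta_x, \C)$ sending $\theta_x$ to $1$ is exactly $\tau([\theta_x])$, by the defining property of $\tau$. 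Hence
$$\vartheta^M(u) = \theta_x({\rm d}\varpi_M(u)) \cdot \tau([\theta_x]) = \theta_x(w) \cdot \tau([\theta_x]).$$
On the other hand, writing $\vartheta^M|_{\sigma(M)} = \check{\theta} \otimes \tau$ and evaluating at the same vector $u$ gives $\vartheta^M(u) = \check{\theta}(u) \cdot \tau([\theta_x])$. Since $\tau([\theta_x])$ is a nonzero element of the line $\sO_{\BP T^{\vee}M}(1)_{[\theta_x]}$, comparing the two expressions yields $\check{\theta}(u) = \theta_x(w)$, that is, $(\sigma^* \check{\theta})(w) = \theta_x(w)$. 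As $x$ and $w$ are arbitrary, this establishes $\sigma^* \check{\theta} = \theta$, which is the assertion.

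I do not expect any genuine obstacle, consistent with the lemma being labelled straightforward; the argument is purely a matter of correctly interpreting the identifications. The only two points that require a little care are, first, to match the trivialization $\tau$ with the canonical representative $\alpha = \theta_x$ used in defining $\vartheta^M$ (so that $\alpha^{\vee}$ and $\tau([\theta_x])$ are literally the same functional, rather than merely proportional), and second, to keep track that the identification $\sigma(M) \cong M$ is implemented by $\varpi_M$ and its inverse $\sigma$, so that comparing $\check{\theta}$ with $\theta$ is the same as comparing $\sigma^*\check{\theta}$ with $\theta$.
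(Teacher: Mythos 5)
Your argument is correct and is exactly the definition-unwinding the paper has in mind: the paper omits the proof entirely, labelling the lemma straightforward, and your computation (choosing the representative $\alpha=\theta_x$ so that $\alpha^{\vee}=\tau([\theta_x])$, then using $\varpi_M\circ\sigma=\mathrm{id}_M$ to get $\vartheta^M(\mathrm{d}_x\sigma(w))=\theta_x(w)\cdot\tau([\theta_x])$) is the intended verification. Nothing is missing.
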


\section{Symplectic geometry of distributions}\label{s.dist}
We present some basic results on the symplectic geometry of distributions.

\begin{definition}\label{d.Levi}
Let $D \subset TM$ be a distribution on a complex manifold $M,$ meaning a  vector subbundle of the tangent bundle of  $M$.
\begin{itemize} \item[(i)] The Lie brackets of local sections of $D$ determine a homomorphism
$${\rm Levi}_x^D: \wedge^2 D_x \to T_x M/D_x,$$ called the {\em Levi tensor} (sometimes called O'Neill tensor or Frobenius tensor) of $D$ at $x$.  When $D$ is  ${\rm Ker}(\vartheta)$ for $\vartheta$ as in Definition \ref{d.form} (2), it is easy to see that ${\rm Levi}_x^D = - \dd_x \vartheta$ for each $x \in M$.
\item[(ii)] Denote by $D^{\perp} \subset T^{\vee}M$ the subbundle consisting of cotangent vectors annihilating elements of $D$.
\item[(iii)] For each $x \in M$ and  $\alpha \in D^{\perp}_x,$ use the natural identification $D^{\perp} = (TM/D)^{\vee}$ to define the anti-symmetric form  on $D_x$, $$  \alpha \circ {\rm Levi}_x^D: \wedge^2 D_x \to \C$$ and its null-space  by $${\rm Null}(\alpha \circ {\rm Levi}_x^D) = \{ v \in D_x, \ \alpha \circ {\rm Levi}_x^D (v, u) = 0 \mbox{ for all } u \in D_x\}.$$  \item[(iv)]
Let $\phi: Y \to M$ be a submersion from a complex  manifold $Y$. Denote by $T^{\phi} \subset TY$ the distribution on $Y$ given by ${\rm Ker} ({\rm d} \phi)$ and by $\phi^{-1}D \subset TY$  the distribution on $Y$ defined as the inverse image of $D$ under ${\rm d} \phi: TY \to TM$ such that its fiber at $y\in Y$ is given by
$$(\phi^{-1}D)_y = ({\rm d}_y \phi)^{-1}(D_{\phi (y)}).$$  It is clear that $T^{\phi} \subset \phi^{-1}D$. \end{itemize} \end{definition}

The following is straightforward.

\begin{lemma}\label{l.inverse}
In Definition \ref{d.Levi} (iv), denote by $$\phi_y: (TY/\phi^{-1}D)_y \to (TM/D)_x$$ the isomorphism induced by ${\rm d}_y \phi: T_yY \to T_x M$ for $y \in Y$ and $x= \phi(y)$. Then
$$\phi_y ( {\rm Levi}_y^{\phi^{-1}D}(u,v) ) = {\rm Levi}_x^D( {\rm d}_y \phi(u), {\rm d}_y \phi (v))$$
 for any $u,v \in (\phi^{-1}D)_y$.
\end{lemma}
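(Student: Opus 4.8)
The plan is to compute both Levi tensors using local sections that are $\phi$-related, so that the naturality of the Lie bracket under $\phi$ carries the entire argument. First I would verify that $\phi_y$ is a well-defined isomorphism. Since $\phi$ is a submersion, ${\rm d}_y\phi$ is surjective with kernel $T^{\phi}_y$; because $T^{\phi}\subset\phi^{-1}D$ and $(\phi^{-1}D)_y=({\rm d}_y\phi)^{-1}(D_x)$, the map ${\rm d}_y\phi$ carries $(\phi^{-1}D)_y$ onto $D_x$ and descends to an isomorphism $(TY/\phi^{-1}D)_y\to(TM/D)_x$, which is precisely $\phi_y$.

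Given $u,v\in(\phi^{-1}D)_y$, set $\bar u_0:={\rm d}_y\phi(u)$ and $\bar v_0:={\rm d}_y\phi(v)$, both in $D_x$, and extend them to local sections $\bar u,\bar v$ of $D$ near $x$. I would then lift these to local sections $\tilde u,\tilde v$ of $\phi^{-1}D$ near $y$ that are $\phi$-related to $\bar u,\bar v$, meaning ${\rm d}\phi\circ\tilde u=\bar u\circ\phi$ and likewise for $v$, and that satisfy $\tilde u_y=u,\ \tilde v_y=v$. Such lifts exist: in a local trivialization realizing $\phi$ as a projection, the obvious horizontal lift of $\bar u$ is a $\phi$-related section of $\phi^{-1}D$, and one corrects its value at $y$ by adding a vector field valued in $T^{\phi}\subset\phi^{-1}D$ with the prescribed value $u-\tilde u_y\in T^{\phi}_y$ at $y$, which does not disturb $\phi$-relatedness since it projects to zero under ${\rm d}\phi$.

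The heart of the matter is the classical fact that $\phi$-related vector fields have $\phi$-related brackets, so ${\rm d}\phi\circ[\tilde u,\tilde v]=[\bar u,\bar v]\circ\phi$ and in particular ${\rm d}_y\phi([\tilde u,\tilde v]_y)=[\bar u,\bar v]_x$. By definition, ${\rm Levi}_y^{\phi^{-1}D}(u,v)$ is the class of $[\tilde u,\tilde v]_y$ modulo $(\phi^{-1}D)_y$, while ${\rm Levi}_x^D(\bar u_0,\bar v_0)$ is the class of $[\bar u,\bar v]_x$ modulo $D_x$; applying $\phi_y$ to the former and invoking the displayed identity yields exactly the latter, which is the claim. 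The only point requiring care, and the step I would state explicitly, is that each Levi tensor is independent of the chosen extensions, so that evaluating with these particular $\phi$-related sections computes the intrinsic tensors. This tensoriality is the standard observation that $[\tilde u,f\tilde v]-f[\tilde u,\tilde v]=(\tilde u f)\,\tilde v$ lies in $\phi^{-1}D$ (and analogously for $D$), already implicit in Definition \ref{d.Levi}. I do not anticipate any serious obstacle: the whole content is bracket-naturality together with the bookkeeping of the two quotient spaces, so the proof should be short once the $\phi$-related lifts are set up.
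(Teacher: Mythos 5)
Your proof is correct. The paper offers no argument for this lemma (it is simply labelled ``straightforward''), and your proof via $\phi$-related lifts of local sections of $D$, bracket-naturality for $\phi$-related vector fields, and the tensoriality of the Levi form is exactly the standard argument the author intends; all the points needing care (existence of the lifts valued in $\phi^{-1}D$, independence of extensions) are handled correctly.
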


\begin{lemma}\label{l.quotient}
Let $\phi: Y \to M$ be a submersion with connected fibers between complex manifolds.
Suppose that $R \subset TY$ is a distribution on $Y$ satisfying
$$T_y^{\phi} \subset R_y \mbox{ and } {\rm Levi}_y^R(u, v) =0 $$  for all $y \in Y,  u \in T_y^{\phi}$ and $ v \in R_y$. Then there exists a distribution $Q \subset TM$ such that $R = \phi^{-1}Q$. \end{lemma}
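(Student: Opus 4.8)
The plan is to construct $Q$ by pushing $R$ forward under $\phi$: for $x \in M$ set $Q_x := {\rm d}_y\phi(R_y)$ for an arbitrary $y \in \phi^{-1}(x)$. Since $T_y^\phi = {\rm Ker}({\rm d}_y\phi) \subset R_y$ by hypothesis, the kernel of ${\rm d}_y\phi|_{R_y}$ is exactly $T_y^\phi$, so the image ${\rm d}_y\phi(R_y)$ has dimension $\rk R - \rk T^\phi$, independent of $y$. The only genuine content is that $Q_x$ does not depend on the choice of $y$ within the (connected) fiber; once that is established, holomorphy and the identity $R = \phi^{-1}Q$ will follow formally.

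First I would reinterpret the Levi hypothesis as a bracket condition. By Definition \ref{d.Levi}(i), the vanishing ${\rm Levi}_y^R(u,v) = 0$ for all $y$, all $u \in T_y^\phi$ and all $v \in R_y$ is equivalent to the statement that $[U, V]$ is a local section of $R$ whenever $U$ is a local section of $T^\phi$ and $V$ is a local section of $R$. Next I would integrate this infinitesimal invariance to invariance of $R$ under the holomorphic flows of vertical vector fields. Fix a local section $U$ of $T^\phi$ with local flow $\{\psi_t\}$, fix $y_0$, choose a local holomorphic frame $V_1,\dots,V_k$ of $R$, and write $[U,V_i] = \sum_j a_{ij}V_j$. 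Then the vectors $\tilde V_i(t) := {\rm d}\psi_{-t}\big(V_i(\psi_t(y_0))\big) \in T_{y_0}Y$ satisfy the linear system
$$ \tfrac{{\rm d}}{{\rm d}t}\,\tilde V_i(t) = \sum_j a_{ij}(\psi_t(y_0))\,\tilde V_j(t), \qquad \tilde V_i(0) = V_i(y_0). $$
Writing the $\tilde V_i$ as the rows of a matrix $N(t)$, one has $\dot N = A(t)N$ with $A = (a_{ij}(\psi_t(y_0)))$, whence $N(t) = P(t)N(0)$ for the invertible fundamental matrix $P$; multiplication on the left by an invertible matrix preserves the row span, so the $\tilde V_i(t)$ span the fixed subspace $R_{y_0}$ for every $t$. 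Since the $\tilde V_i(t)$ span ${\rm d}\psi_{-t}(R_{\psi_t(y_0)})$, this gives ${\rm d}\psi_t(R_{y_0}) = R_{\psi_t(y_0)}$. As $U$ is vertical, $\phi\circ\psi_t = \phi$, so ${\rm d}\phi\circ{\rm d}\psi_t = {\rm d}\phi$ and therefore ${\rm d}\phi(R_{\psi_t(y_0)}) = {\rm d}\phi(R_{y_0})$; that is, ${\rm d}\phi(R_\cdot)$ is constant along every flow line of a vertical vector field.

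I would then invoke connectedness to conclude $Q_x$ is well defined. Because $\phi$ is a submersion, every tangent vector to the fiber $\phi^{-1}(x)$ is the value of some local vertical vector field, and any two points of the connected fiber can be joined by a path covered by finitely many flow-boxes of such fields; along each segment ${\rm d}\phi(R_\cdot)$ is constant, so $Q_x := {\rm d}_y\phi(R_y)$ is independent of $y \in \phi^{-1}(x)$. For holomorphy I would work over a local holomorphic section $s: U \to Y$ of $\phi$: with $V_1,\dots,V_k$ a local frame of $R$ near $s(x_0)$, the vectors ${\rm d}_{s(x)}\phi\big(V_i(s(x))\big)$ are holomorphic in $x$ and span $Q_x$, and since $Q$ has constant rank it is a holomorphic subbundle of $TM$. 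Finally, by construction $R_y \subseteq ({\rm d}_y\phi)^{-1}(Q_{\phi(y)}) = (\phi^{-1}Q)_y$, and both sides have dimension $\rk Q + \rk T^\phi$, so $R = \phi^{-1}Q$.

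The main obstacle is the passage from the pointwise Levi vanishing to honest flow-invariance of $R$, i.e.\ the linear-ODE argument showing that the span of the $\tilde V_i(t)$ is constant; everything else — constancy of rank, holomorphy via a local section, and the closing dimension count — is formal once that invariance is secured.
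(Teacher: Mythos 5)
Your proof is correct, and its overall strategy coincides with the paper's: both define $Q_x := {\rm d}_y\phi(R_y)$ and use the Levi hypothesis together with connectedness of the fibers to show that this subspace is independent of $y \in \phi^{-1}(x)$. The difference is in how that key step is executed. The paper packages the assignment $y \mapsto {\rm d}_y\phi(R_y)$ as a holomorphic map $\gamma: Y \to \Gr(r, TM)$ and computes its differential in vertical directions, showing that ${\rm d}_y\gamma(u)$ is the homomorphism ${\rm d}_y\phi(v) \mapsto -{\rm d}_y\phi({\rm Levi}_y^R(u,v))$, so that the hypothesis makes $\gamma$ constant on fibers; you instead integrate the equivalent bracket condition $[U,V]\in\Gamma(R)$ (for $U$ vertical and $V\in\Gamma(R)$) into genuine flow-invariance ${\rm d}\psi_t(R_{y_0}) = R_{\psi_t(y_0)}$ via the fundamental-matrix argument for the linear ODE satisfied by ${\rm d}\psi_{-t}\bigl(V_i(\psi_t(y_0))\bigr)$, and then use ${\rm d}\phi\circ{\rm d}\psi_t = {\rm d}\phi$. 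Your route proves slightly more (invariance of $R$ itself under vertical flows, not merely of its image under ${\rm d}\phi$) and rests on the classical and elementary ODE fact; the paper's route isolates the exact infinitesimal obstruction by identifying the vertical derivative of the Grassmannian-valued map with the Levi tensor, which makes the role of the hypothesis more transparent. Both arguments then finish identically with constancy on connected fibers and the dimension count giving $R = \phi^{-1}Q$; your explicit treatment of holomorphy of $Q$ via a local section of $\phi$ is a point the paper leaves implicit.
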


\begin{proof}
Write $r= {\rm rank}(R) - {\rm rank}(T^{\phi}).$
Let $\beta: {\rm Gr}(r, TM)\to M$ be the fiber bundle with the fiber $\beta^{-1}(x)$ at $x \in M$ equal to the Grassmannian ${\rm Gr}(r, T_x M)$ of $r$-dimensional subspaces in $T_x M$.
Recall that for a subspace $V \subset T_x M$ of dimension $r$ and the corresponding point $[V] \in {\rm Gr}(r, T_x M)$, there is a natural identification $$ T_{[V]} {\rm Gr}(r, T_x M) = \Hom (V, T_x M/V).$$
Define a holomorphic map $\gamma: Y \to {\rm Gr}(r, TM)$
by sending each $y \in Y$ to
 the $r$-dimensional subspace  ${\rm d}_y \phi (R_y) \subset T_{\phi(y)}M $.

 We claim that for $u \in T^{\phi}_y$, $${\rm d}_y \gamma (u) \in T_{\gamma(y)} {\rm Gr}(r, T_{\phi(y)}M) = \Hom ({\rm d}_y \phi (R_y), T_{\phi(y)}M /{\rm d}_y \phi (R_y))$$ is the homomorphism sending ${\rm d}_y \phi (v)$ to $-{\rm d}_y \phi ({\rm Levi}_y^R(u, v))$ for all $v \in R_y$.
 In fact, choose a local section $\tilde{u}$ of $T^{\phi}$ satisfying $\tilde{u}(y) = u$ and a local section $\tilde{v}$ of $R$ satisfying $\tilde{v}(y) = v$. Let $\{ \varphi_t, t \in \Delta\}$ be the  1-parameter group-germ of local biholomorphisms generated by the local vector field $\tilde{u}$ on $Y$. Since $\tilde{u}$ is tangent to the fibers of $\phi$, $${\rm d} \phi ({\rm d} \varphi_t (v)) - {\rm d} \phi (v) = 0,$$ for $ t\in \Delta$ sufficiently close to $0 \in \Delta$.
 Thus $$\frac{1}{t} {\rm d} \phi ({\rm d} \varphi_t (v) - \tilde{v}_{\varphi_t(y)}) = \frac{1}{t}({\rm d} \phi (v) - {\rm d} \phi (\tilde{v}_{\varphi_t(y)})).$$
 The right hand side  modulo ${\rm d}_y \phi (R_y)$ converges to $-  {\rm d}_y \gamma (u) ({\rm d}_y \phi (v))$ as $t$ approaches 0, while the left hand side converges to the image of the Lie derivative $$ {\rm d}_y \phi ({\rm Lie}_{\tilde{u}} \tilde{v}) = {\rm d}_y \phi ([\tilde{u}, \tilde{v}])$$ as $t$ approaches zero. This  modulo ${\rm d}_y \phi (R_y)$ is $ {\rm d}_y \phi ({\rm Levi}_y^R (u, v))$, which proves the claim.

  By the claim, the morphism $\gamma $ is constant on each fiber of $\phi$ under the assumption of the lemma and defines a section of ${\rm Gr}(r, TM) \to M$, i.e., a distribution $Q \subset TM$ of rank $r$. It is immediate that $R = \phi^{-1}Q$. \end{proof}

\begin{lemma}\label{l.reduction}
Let $L$ be a line bundle on a complex manifold $Y$ and let $Z \subset Y$ be a compact complex submanifold. Let $\vartheta: TY \to L$ be a nowhere vanishing $L$-valued 1-form on $Y$ which defines the distribution $ {\rm Ker}(\vartheta) \subset TY$ of corank 1. Assume that \begin{itemize} \item[(1)] ${\rm Null}({\rm Levi}_y^{{\rm Ker}(\vartheta)})\subset {\rm Ker}(\vartheta_y)$ has dimension $k$ for all $y \in Y$ for some integer $k \geq 0$ and
  \item[(2)] ${\rm Null}({\rm Levi}_y^{{\rm Ker}(\vartheta)}) \cap T_y Z =0$ for all $y \in Z$. \end{itemize}
Then there exist a neighborhood $O \subset Y$ of $Z$ and a
submersion $\nu:O \to X$ to a complex manifold $X$ with a contact structure $D \subset TX$ such that  $\dim X = \dim Y -k$,  ${\rm Ker}(\vartheta)|_{O}= \nu^{-1} D$ and $T^{\nu}_y = {\rm Null}({\rm Levi}_y^{{\rm Ker}(\vartheta)})$ for all $y\in O$. \end{lemma}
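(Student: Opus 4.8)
Write $n=\dim Y$ and set $N:={\rm Null}({\rm Levi}^{{\rm Ker}(\vartheta)})$, which by hypothesis (1) is a rank-$k$ subbundle of ${\rm Ker}(\vartheta)$; since ${\rm Levi}^{{\rm Ker}(\vartheta)}=-\dd\vartheta$ by Definition \ref{d.Levi}(i), a vector $v\in{\rm Ker}(\vartheta_y)$ lies in $N_y$ exactly when $\dd_y\vartheta(u,v)=0$ for all $u\in{\rm Ker}(\vartheta_y)$. The whole statement will follow once $N$ is realized as the relative tangent bundle $T^\nu$ of a submersion $\nu\colon O\to X$ with connected fibres, defined on a neighbourhood $O$ of $Z$: indeed, Lemma \ref{l.quotient} will then push ${\rm Ker}(\vartheta)$ down to a distribution $D$ on $X$, and Lemma \ref{l.inverse} will identify its Levi tensor. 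Thus the two substantive tasks are to prove that $N$ is integrable and to build the submersion $\nu$ near $Z$.

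I would first establish integrability of $N$, which is a local question. Fixing a local factorization $\vartheta=\check\theta\otimes l$ with $\check\theta$ a nowhere vanishing $1$-form as in Definition \ref{d.form}(2), we have ${\rm Ker}(\vartheta)={\rm Ker}(\check\theta)$, and the description above identifies $N$ with the Cauchy characteristics $N_y=\{v\in{\rm Ker}(\check\theta_y)\colon \iota_v{\rm d}\check\theta_y\in\C\,\check\theta_y\}$, because a covector annihilating the hyperplane ${\rm Ker}(\check\theta_y)$ is a multiple of $\check\theta_y$. If $\xi,\eta$ are local sections of $N$, say $\iota_\xi\check\theta=\iota_\eta\check\theta=0$, $\iota_\xi{\rm d}\check\theta=a\,\check\theta$ and $\iota_\eta{\rm d}\check\theta=b\,\check\theta$, then Cartan's formula gives ${\rm Lie}_\xi\check\theta=\iota_\xi{\rm d}\check\theta=a\,\check\theta$, whence $\iota_{[\xi,\eta]}\check\theta=0$ and, after a short computation, $\iota_{[\xi,\eta]}{\rm d}\check\theta=(\xi b-\eta a)\,\check\theta$. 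Hence $[\xi,\eta]$ is again a section of $N$, so $N$ defines a holomorphic foliation $\sF$ of rank $k$.

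The construction of $\nu$ is the crux, and is where hypothesis (2) and the compactness of $Z$ enter. Since $N_y\cap T_yZ=0$ along $Z$, the bundle $TZ$ maps injectively into $(TY/N)|_Z$, so it can be completed to a rank-$(n-k)$ subbundle $E\subset TY|_Z$ with $TZ\subset E$ and $E_y\oplus N_y=T_yY$ for all $y\in Z$; realizing $E$ as the tangent spaces along $Z$ of a submanifold $Z\subset W\subset Y$ of dimension $n-k$ produces a \emph{complete transversal} to $\sF$, transverse to the leaves along $Z$ and hence throughout a neighbourhood of $Z$ in $W$. By the Frobenius theorem, $\sF$ is a product in a foliated chart about each point of the compact set $Z$, with $W$ appearing as a slice; covering $Z$ by finitely many such charts and trimming them to a uniform size, I expect to produce an open neighbourhood $O\supset Z$ in which every plaque of $\sF$ meets $W$ in a single point. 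Sending each point of $O$ to that intersection then defines a holomorphic submersion $\nu\colon O\to W':=\nu(O)\subset W$ with connected fibres the plaques of $\sF$ and $T^\nu_y=N_y$; one sets $X:=W'$, so $\dim X=n-k$ and $\nu|_Z$ is the inclusion. The delicate step — and the one I expect to be the main obstacle — is exactly this passage from the local product structure to a single Hausdorff quotient $X$ carrying a globally defined $\nu$; both the transversality (2) and the compactness of $Z$ are needed to make the neighbourhood $O$ uniform.

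Granting $\nu$, the rest is formal. Applying Lemma \ref{l.quotient} with $R:={\rm Ker}(\vartheta)|_O$ — whose hypotheses hold since $T^\nu=N\subset{\rm Ker}(\vartheta)$ and ${\rm Levi}^{{\rm Ker}(\vartheta)}_y(u,v)=0$ for $u\in T^\nu_y=N_y$ and all $v$, by the definition of $N$ — yields a distribution $D\subset TX$ with ${\rm Ker}(\vartheta)|_O=\nu^{-1}D$; a rank count gives that $D$ has corank $1$ and that $\dim X=n-k$. It remains to check that $D$ is a contact structure. Writing $\vartheta'\colon TX\to TX/D$ for the tautological quotient form, so that ${\rm Ker}(\vartheta')=D$ and ${\rm Levi}^D=-\dd\vartheta'$, take any $\bar w\in{\rm Null}({\rm Levi}^D_x)$, pick $y\in\nu^{-1}(x)$, and lift $\bar w$ to $w\in{\rm Ker}(\vartheta_y)$. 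For every $v\in{\rm Ker}(\vartheta_y)$ we have ${\rm d}_y\nu(v)\in D_x$, so Lemma \ref{l.inverse} gives $\nu_y({\rm Levi}^{{\rm Ker}(\vartheta)}_y(w,v))={\rm Levi}^D_x(\bar w,{\rm d}_y\nu(v))=0$; as $\nu_y$ is an isomorphism this forces ${\rm Levi}^{{\rm Ker}(\vartheta)}_y(w,v)=0$ for all such $v$, i.e. $w\in N_y=T^\nu_y$, and therefore $\bar w={\rm d}_y\nu(w)=0$. Hence ${\rm Null}({\rm Levi}^D_x)=0$ for every $x$, so $\vartheta'$ is a contact form and $D\subset TX$ is a contact structure; since $T^\nu_y=N_y={\rm Null}({\rm Levi}^{{\rm Ker}(\vartheta)}_y)$ by construction, all the asserted properties hold.
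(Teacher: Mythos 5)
Your overall strategy is the one the paper uses: identify $N={\rm Null}({\rm Levi}^{{\rm Ker}(\vartheta)})$ with the Cauchy characteristic of ${\rm Ker}(\vartheta)$, integrate it to a rank-$k$ foliation, take the local quotient near the compact $Z$, and push ${\rm Ker}(\vartheta)$ down via Lemma \ref{l.quotient}. Your integrability computation via Cartan's formula and your final verification that the quotient distribution $D$ is contact (which the paper leaves implicit) are both correct. The problem is the step you yourself flag as the crux: you build the quotient map $\nu$ by first producing an embedded holomorphic transversal $Z\subset W\subset Y$ of dimension $n-k$ with $T_yW\oplus N_y=T_yY$ along $Z$, and then projecting plaques onto $W$. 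In the holomorphic category the existence of such a $W$ is not automatic and is genuinely obstructed. Already the first step --- completing $TZ$ to a rank-$(n-k)$ subbundle $E\subset TY|_Z$ complementary to $N|_Z$ --- amounts to splitting the exact sequence of holomorphic bundles $0\to N|_Z\to TY|_Z/TZ\to (TY|_Z/TZ)/(N|_Z)\to 0$ on the compact $Z$, and such extensions need not split (there are no holomorphic partitions of unity; e.g.\ for $Z=\BP^1$ the Euler sequence $0\to\sO(-1)\to\sO^{\oplus 2}\to\sO(1)\to 0$ does not split). Even when a splitting exists, integrating $E$ to an actual submanifold $W$ with $TW|_Z=E$ carries further higher-order obstructions, of the same nature as the failure of holomorphic tubular neighborhoods.

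The fix is to dispense with the embedded transversal entirely and construct $X$ abstractly, which is what the paper does: for each $z\in Z$ take a foliation chart $O_z$ with local quotient $\nu_z:O_z\to X_z$ onto the local leaf space, cover $Z$ by finitely many such charts using compactness, observe that the fibres of $\nu_{z_i}$ and $\nu_{z_j}$ agree on overlaps, and glue the local leaf spaces $X_{z_i}$ into a complex manifold $X$ of dimension $n-k$ receiving a submersion $\nu$ from a suitable neighbourhood $O$ of $Z$ with $T^\nu=N|_O$. Hypothesis (2) is then used only to guarantee that the leaves are transverse to $Z$, so that the charts can be chosen adapted to $Z$ and the glued quotient is a (Hausdorff) manifold near $Z$; it is not used to manufacture a slice inside $Y$. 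Once $\nu$ is obtained this way, the remainder of your argument --- Lemma \ref{l.quotient} applied to $R={\rm Ker}(\vartheta)|_O$, the rank count, and the nondegeneracy check via Lemma \ref{l.inverse} --- goes through verbatim.
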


\begin{proof}
By the assumption (1), we have a distribution $\sN$ on $Y$ whose fiber at $y \in Y$ is ${\rm Null}({\rm Levi}_y^{{\rm Ker}(\vartheta)})$. It is easy to see that $\sN$ is integrable (in fact, this is the Cauchy characteristic of ${\rm Ker}(\vartheta)$ in the sense of Section 2 of \cite{Hw12}), defining a foliation of rank $k$ on $Y$ whose leaves are transversal to $Z$ by the assumption (2).
Pick a point $z \in Z$. We have a neighborhood $O_z \subset Y$ of $z$ with a submersion $\nu_z: O_z \to X_z$ whose fibers are leaves of $\sN$ on $O_z$. Since $Z$ is compact, there are finitely many points $z_1, \ldots, z_N \in Z$ such that $Z \subset \cup_{i=1}^N O_{z_i}$.
The fibers of two submersions $\nu_{z_i}$ and $\nu_{z_j}$ agree on the intersection $O_{z_i} \cap O_{z_j}$ for $1 \leq i, j \leq N$. Thus we can patch them to define a submersion $\nu: O \to X$ from a neighborhood $O \subset \cup_{i=1}^N O_{z_i}$ of $Z$ to a complex manifold $X$ of dimension $\dim Y -k$ with $T^{\nu} = \sN|_O$.   Applying  Lemma \ref{l.quotient} to $\nu$, we can find a contact structure $D$ on $X$ satisfying ${\rm Ker}(\vartheta)|_{O}= \nu^{-1} D$, proving the lemma. \end{proof}

\begin{proposition}\label{p.KZ} In the setting of Definition \ref{d.Levi}, let $\phi: D^{\perp} \to M$ be the natural projection and
 let ${\rm d} \theta^M$ be the natural symplectic form on $T^{\vee}M$ from Notation \ref{n.cotangent}. For  $x \in M$ and a nonzero $\alpha \in D^{\perp}_x$, define  $${\rm Null}({\rm d} \theta^M |_{T_{\alpha}D^{\perp}}) := \{ v \in T_{\alpha} D^{\perp}, \ {\rm d} \theta^M (v, u) =0 \mbox{ for all } u \in T_{\alpha} D^{\perp} \}.$$ Then \begin{itemize}
 \item[(i)] ${\rm Null}({\rm d} \theta^M |_{T_{\alpha}D^{\perp}})  \cap T^{\phi}_{\alpha} = 0$ and
     \item[(ii)]  the differential ${\rm d}_{\alpha} \phi: T_{\alpha} D^{\perp} \to T_x M$ gives an isomorphism   $$ {\rm Null}({\rm d} \theta^M|_{T_{\alpha} D^{\perp}}) \cong {\rm Null}(\alpha \circ {\rm Levi}_x^D) \subset D_x \subset T_x M.$$ In particular, we have the inclusion ${\rm Null}({\rm d} \theta^M|_{T_{\alpha} D^{\perp}})  \subset {\rm Ker}(\theta^M_{\alpha}).$ \end{itemize} \end{proposition}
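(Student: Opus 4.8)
The plan is to reduce everything to a direct computation in a local frame adapted to $D$. Over a neighborhood of $x$ choose $1$-forms $\theta^1,\dots,\theta^c$ (where $c=\dim M-{\rm rank}(D)$) forming a frame of $D^\perp$, so that $D=\bigcap_i{\rm Ker}(\theta^i)$ and $\{\theta^i_x\}$ is a basis of $D^\perp_x=(T_xM/D_x)^\vee$. A point of $D^\perp$ is then $\alpha=\sum_i s_i\,\theta^i_x$, giving fiber coordinates $(x,s)=(x,s_1,\dots,s_c)$ on the total space and exhibiting $T^\phi$ as the span of the $\partial/\partial s_i$. By the very definition of the canonical $1$-form $\theta^M$ in Notation \ref{n.cotangent}(2), its pullback to $D^\perp$ is the tautological form $\lambda:=\sum_i s_i\,\phi^*\theta^i$, and since pullback commutes with ${\rm d}$ the restriction of the symplectic form is
$$\Omega:={\rm d}\theta^M|_{T_\alpha D^\perp}={\rm d}\lambda=\sum_i {\rm d}s_i\wedge\phi^*\theta^i+\sum_i s_i\,\phi^*{\rm d}\theta^i.$$
Writing a tangent vector as $v=X+\sum_i\dot s_i\,\partial/\partial s_i$ with $X$ horizontal (so $d_\alpha\phi(v)=X\in T_xM$), I would evaluate $\Omega(v,u)$ against vertical and horizontal test vectors $u$ separately, using that $\phi^*\theta^i$ and $\phi^*{\rm d}\theta^i$ kill vertical vectors.

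First I would test against vertical $u=\partial/\partial s_j$. Only the wedge term survives, and one finds $\Omega(v,\partial/\partial s_j)=-\theta^j_x(X)$. Hence $v\in{\rm Null}(\Omega)$ forces $\theta^j_x(X)=0$ for all $j$, i.e.\ $X=d_\alpha\phi(v)\in D_x$; specializing to $X=0$ shows that a nonzero vertical vector cannot lie in ${\rm Null}(\Omega)$, which is exactly statement (i). In particular $d_\alpha\phi$ is injective on ${\rm Null}(\Omega)$, since its kernel on $T_\alpha D^\perp$ is $T^\phi_\alpha$, which meets ${\rm Null}(\Omega)$ trivially. For the "in particular" clause I would note $\theta^M_\alpha(v)=\alpha(d_\alpha\phi(v))=\alpha(X)=0$ because $X\in D_x$ and $\alpha\in D^\perp_x$, giving ${\rm Null}(\Omega)\subset{\rm Ker}(\theta^M_\alpha)$.

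Next I would test against horizontal $u=Y$, obtaining $\Omega(v,Y)=\sum_i\dot s_i\,\theta^i_x(Y)+\sum_i s_i\,{\rm d}\theta^i_x(X,Y)$. The key identification is that for $U,W\in D_x$ one has ${\rm d}\theta^i_x(U,W)=-\theta^i_x([U,W])$, so $\sum_i s_i\,{\rm d}\theta^i_x(U,W)=-\alpha\circ{\rm Levi}^D_x(U,W)$ with $\alpha=\sum_i s_i\theta^i_x$; this is where $X\in D_x$ must already be known, which is why the vertical test is run first. I would then split the condition $\Omega(v,Y)=0$ for all $Y$ by letting $Y$ range over $D_x$ and over a transverse complement: for $Y\in D_x$ the first sum vanishes and the condition becomes exactly $X\in{\rm Null}(\alpha\circ{\rm Levi}^D_x)$, while for transverse $Y$ it reads $\sum_i\dot s_i\,\theta^i_x=-\sum_i s_i\,{\rm d}\theta^i_x(X,\cdot)$ as covectors. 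Since $\{\theta^i_x\}$ is a basis of $D^\perp_x$, this last equation has a (unique) solution in $(\dot s_i)$ precisely when the right-hand functional annihilates $D_x$, i.e.\ precisely when $X\in{\rm Null}(\alpha\circ{\rm Levi}^D_x)$. Thus every such $X$ lifts uniquely to an element of ${\rm Null}(\Omega)$, so $d_\alpha\phi$ maps ${\rm Null}(\Omega)$ bijectively onto ${\rm Null}(\alpha\circ{\rm Levi}^D_x)\subset D_x$, proving (ii). The only real bookkeeping obstacle is keeping the vertical and horizontal conditions—and the $D_x$-directional versus transverse parts of $Y$—cleanly separated; the computation itself is routine once the frame is fixed, and the conclusions are manifestly independent of the auxiliary horizontal/vertical splitting because they are phrased through $d_\alpha\phi$.
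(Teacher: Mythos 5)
Your proof is correct, but it follows a genuinely different route from the paper's. The paper identifies ${\rm Null}({\rm d}\theta^M|_{T_\alpha D^\perp})$ with $D^\sharp_\alpha\cap T_\alpha D^\perp$, where $D^\sharp_\alpha$ is spanned by the Hamiltonian lifts $\vec h_{\vec v}(\alpha)$ of local sections $\vec v$ of $D$: since $D^\perp$ is locally the common zero locus of the functions $h_{\vec v_1},\dots,h_{\vec v_r}$, the symplectic orthogonal of $T_\alpha D^\perp$ is spanned by the $\vec h_{\vec v_i}$, and the condition for $\vec h_{\vec v}(\alpha)$ to be tangent to $D^\perp$ is then converted, via the flow interpretation of Proposition \ref{p.AS} and the Cartan formula, into $\vec v_x\in{\rm Null}(\alpha\circ{\rm Levi}^D_x)$. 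You instead fix a coframe $\theta^1,\dots,\theta^c$ of $D^\perp$, write the restricted tautological form as $\sum_i s_i\,\phi^*\theta^i$, expand $\Omega={\rm d}\theta^M|_{D^\perp}$ explicitly, and solve the resulting linear system; the key identity ${\rm d}\theta^i(U,W)=-\theta^i([U,W])$ for $U,W$ sections of $D$ plays the role of the paper's Cartan-formula step. Your version is more elementary and self-contained (it needs neither Proposition \ref{p.AS} nor Hamiltonian vector fields), at the cost of an auxiliary frame and horizontal splitting, whose irrelevance you correctly note since all conclusions are phrased through ${\rm d}_\alpha\phi$; the paper's version is frame-free and makes the link with the symplectic-orthogonal picture of \cite{AS} and \cite{Z99} explicit. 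All your computations check out, including the signs and the observation that the covector equation $\sum_i\dot s_i\theta^i_x=-\sum_i s_i\,{\rm d}\theta^i_x(X,\cdot)$ is solvable exactly when the right-hand side lies in $D^\perp_x$.

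One small presentational repair: as literally written, your first paragraph attributes statement (i) to the vertical test alone, but for a purely vertical $v$ (so $X=0$) the pairing with vertical $u$ vanishes identically and gives no information. What actually kills a nonzero vertical vector is the pairing with horizontal $u=Y$, namely $\Omega(v,Y)=\sum_i\dot s_i\,\theta^i_x(Y)$, which forces $\dot s=0$ since $\{\theta^i_x\}$ is a basis of $D^\perp_x$. This is exactly the uniqueness clause you establish in your second paragraph, so the argument is complete; you should just move the deduction of (i) after the horizontal test.
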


\begin{proof}
This is essentially contained in Section 2.1 of \cite{Z99}. Since \cite{Z99} is presented in the setting of real differentiable manifolds with some notation different from ours, we give a full proof for the reader's convenience.

For each nonzero $\alpha \in D_x^{\perp}$, we define a subspace $$D^{\sharp}_{\alpha} \subset {\rm Ker}(\theta^M_{\alpha}) \subset T_{\alpha}(T^{\vee}M)$$ isomorphic to $D_x$ via the projection ${\rm d}_{\alpha} \pi_M: T_{\alpha}(T^{\vee}M) \to T_x M$ in the following way. For each $v \in D_x$, choose a local section $\vec{v}$ of $D$ in a neighborhood of $x$ with $v= \vec{v}(x)$. Let $\vec{h}_{\vec{v}}$ be the vector field on a neighborhood $O$ of $T_x^{\vee}M$ in $T^{\vee} M$ defined as in Proposition \ref{p.AS}.  It is easy to see that the value $\vec{h}_{\vec{v}} (\alpha)$ of $\vec{h}_{\vec{v}}$ at a point $\alpha \in D_x^{\perp}$ is determined by $v$, independent of the choice of $\vec{v}.$ Then the association $v \mapsto \vec{h}_{\vec{v}} (\alpha)$  defines a linear homomorphism $D_x \to T_{\alpha}(T^{\vee}M)$ whose image is the subspace $D^{\sharp}_{\alpha}.$

For each nonzero $\alpha \in D^{\perp}_x$, we claim
\begin{eqnarray}\label{e.Z99} {\rm Null} ({\rm d} \theta^M|_{T_{\alpha} D^{\perp}}) &=&  D^{\sharp}_{\alpha} \cap T_{\alpha} D^{\perp} \ \subset T_{\alpha}(T^{\vee} M) \end{eqnarray}
and \begin{equation}\label{e.KZ}  D^{\sharp}_{\alpha} \cap T_{\alpha} D^{\perp} \cong {\rm Null}(\alpha \circ {\rm Levi}^D_x) \subset D_x \end{equation} under the natural isomorphism $
D_x \cong D^{\sharp}_{\alpha}$. Certainly, (\ref{e.Z99}) and (\ref{e.KZ}) imply Proposition \ref{p.KZ}.

To prove the claim (\ref{e.Z99}), let $r$ be the rank of $D$ and let $\vec{v}_1, \ldots, \vec{v}_r$ be local sections of $D$ in a neighborhood $U \subset M$ of $x$  which gives a basis of the fibers of $D$ over $U$. They define functions $h_{\vec{v}_1}, \ldots, h_{\vec{v}_r}$ in a neighborhood of $\alpha \in T^{\vee}M$, as in Proposition \ref{p.AS}.   Then the germ of the submanifold $D^{\perp} \subset T^{\vee}M$ at $\alpha$ is the common zero set of the functions $h_{\vec{v}_1}, \ldots, h_{\vec{v}_r}$ in a neighborhood of $\alpha$. Note that a vector $v \in T_{\alpha} D^{\perp}$ is in ${\rm Null} ({\rm d} \theta^M|_{T_{\alpha}D^{\perp}})$ if and only if the 1-form ${\rm d}\theta^M(v, \cdot)$ on $T^{\vee} M$ annihilates $T_{\alpha} D^{\perp}$. This holds exactly when ${\rm d}\theta^M(v, \cdot)$ is in the linear span of ${\rm d} h_{\vec{v}_1}, \ldots, {\rm d} h_{\vec{v}_r}$ in $T^{\vee}_{\alpha}(T^{\vee} M)$. As ${\rm d} \theta^M$ is a symplectic form, this happens when $v$ is in the linear span of $\vec{h}_{\vec{v}_1}, \ldots, \vec{h}_{\vec{v}_r}$, i.e., when $v \in D^{\sharp}_{\alpha}.$ This proves (\ref{e.Z99}).

To prove the claim (\ref{e.KZ}),
let $\vec{v}$ be a local section of $D$ and let $\vec{h}_{\vec{v}}$ be the vector field on a neighborhood $O$ of $T^{\vee}_x M$ in $T^{\vee}M$ as defined in Proposition \ref{p.AS}.  By Proposition \ref{p.AS}, the vector $\vec{h}_{\vec{v}}(\alpha) \in D^{\sharp}_{\alpha}$ is tangent to $D^{\perp}$ if and only if the value of the 1-form ${\rm Lie}_{\vec{v}}\tilde{\alpha}$  at $x$ lies in $D^{\perp}_x$ for any local section $\tilde{\alpha} $ of $D^{\perp} \subset T^{\vee}M$ in a neighborhood of $x$ satisfying $\tilde{\alpha}_x = \alpha$.
This is equivalent to saying $${\rm Lie}_{\vec{v}} \tilde{\alpha} (\vec{w})_x = 0$$ for  any local section $\vec{w}$ of $D$ in a neighborhood of $x$. By Cartan formula, this is equivalent to $$ 0 = \vec{w} (\tilde{\alpha}(\vec{v}))_x + {\rm d} \tilde{\alpha} (\vec{v}, \vec{w})_x.$$
Since $\tilde{\alpha} (\vec{v}) \equiv 0 \equiv \tilde{\alpha}(\vec{w}),$ this is equivalent to $$ 0 = \tilde{\alpha}_x([ \vec{v}, \vec{w}]) =  \alpha \circ {\rm Levi}_x^D (\vec{v}, \vec{w}),$$ which says that $\vec{v}_x \in {\rm Null}(\alpha \circ {\rm Levi}_x^D)$. This proves (\ref{e.KZ}).   \end{proof}

We can reformulate Proposition \ref{p.KZ} in terms of contact forms as follows.

\begin{proposition}\label{p.null}
In Proposition \ref{p.KZ}, for each nonzero $\alpha \in D_x^{\perp}$, let $[\alpha] \in \BP D^{\perp} \subset \BP T^{\vee} M$ be the corresponding point in the projectivization of $T^{\vee} M$. Then for the $\sO_{\BP D^{\perp}}(1)$-valued 1-form $\vartheta^M|_{\BP D^{\perp}}$ on $\BP D^{\perp}$, we have an isomorphism
$$ {\rm Null} (\dd_{[\alpha]} (\vartheta^M|_{\BP D^{\perp}})) \cong {\rm Null}(\alpha \circ {\rm Levi}_x^D) \subset D_x$$ under the natural projection $\BP D^{\perp} \to M$. \end{proposition}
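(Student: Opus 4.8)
The plan is to lift the computation from the projectivization $\BP D^{\perp}$ back to the total space $D^{\perp}\subset T^{\vee}M$, where Proposition \ref{p.KZ} already identifies the relevant null-space, and then to reconcile the two null-spaces using the Euler (Liouville) vector field. Fix $x\in M$ and a nonzero $\alpha\in D^{\perp}_x$, and set $K:={\rm Ker}(\theta^M_{\alpha})\subset T_{\alpha}(T^{\vee}M)$ and $W:=T_{\alpha}D^{\perp}$. Let $\rho:T^{\vee}M\setminus 0_M\to \BP T^{\vee}M$ be the projectivization (complement of the zero section), and let $\rho_D$ be its restriction to $D^{\perp}\setminus 0_M$, landing in $\BP D^{\perp}$. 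Let $E$ denote the Euler vector field generating the fibrewise scaling of $T^{\vee}M$; since $D^{\perp}$ is a linear subbundle, $E$ is tangent to it, so $E_{\alpha}\in W$, and since ${\rm d}\pi_M(E)=0$ one has $E_{\alpha}\in K$, while $\C E_{\alpha}={\rm Ker}({\rm d}_{\alpha}\rho_D)$.

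First I would record the tautological pullback identity. Using $\varpi_M\circ\rho=\pi_M$ and the definitions in Notation \ref{n.cotangent}, a direct computation gives $\rho^{*}\vartheta^M=\theta^M\otimes s$, where $s$ is the tautological nowhere-vanishing section $\alpha\mapsto\alpha^{\vee}$ of $\rho^{*}\sO_{\BP T^{\vee}M}(1)$. Restricting to $D^{\perp}$ yields $\rho_D^{*}(\vartheta^M|_{\BP D^{\perp}})=(\theta^M|_{D^{\perp}})\otimes(j^{*}s)$, where $j:D^{\perp}\hookrightarrow T^{\vee}M$ is the inclusion. Since $\phi:=\pi_M|_{D^{\perp}}$ is a submersion and $\alpha\neq 0$, the form $\theta^M|_{W}$ is nonzero; hence $\vartheta^M|_{\BP D^{\perp}}$ is a nowhere-vanishing $\sO_{\BP D^{\perp}}(1)$-valued $1$-form and Definition \ref{d.form} applies to it. Because the exterior derivative commutes with pullback, the trivialization $j^{*}s$ identifies $\dd_{\alpha}(\rho_D^{*}(\vartheta^M|_{\BP D^{\perp}}))$ with the restriction of ${\rm d}\theta^M$ to $\wedge^2(W\cap K)$, so that
$$ N:={\rm Null}\big(\dd_{\alpha}(\rho_D^{*}(\vartheta^M|_{\BP D^{\perp}}))\big)=\{v\in W\cap K:\ {\rm d}\theta^M(v,u)=0\text{ for all }u\in W\cap K\}. $$

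Next I would compare $N$ with $P:={\rm Null}({\rm d}\theta^M|_{W})$, the space appearing in Proposition \ref{p.KZ}. The key input is the Liouville identity $\iota_E\,{\rm d}\theta^M=\theta^M$, which follows from $\iota_E\theta^M=0$, ${\rm Lie}_E\theta^M=\theta^M$ and the Cartan formula. It gives ${\rm d}\theta^M(E_{\alpha},u)=\theta^M(u)$ for all $u$, whence $E_{\alpha}\in N$, while $E_{\alpha}\notin P$ because $\theta^M|_{W}\neq 0$. Using the ``in particular'' clause of Proposition \ref{p.KZ}(ii), namely $P\subset K$, one checks $P\subseteq N$; conversely, writing any $v\in N$ as $v=(v-cE_{\alpha})+cE_{\alpha}$ and choosing $c={\rm d}\theta^M(v,u_{*})$ for a vector $u_{*}\in W$ with $\theta^M(u_{*})=1$, the relation ${\rm d}\theta^M(E_{\alpha},\cdot)=\theta^M$ forces $v-cE_{\alpha}\in P$. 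Hence $N=P\oplus\C E_{\alpha}$.

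Finally, applying Lemma \ref{l.pullback} to the submersion $\rho_D$ gives $N=({\rm d}_{\alpha}\rho_D)^{-1}\big({\rm Null}(\dd_{[\alpha]}(\vartheta^M|_{\BP D^{\perp}}))\big)$; since $\C E_{\alpha}={\rm Ker}({\rm d}_{\alpha}\rho_D)\subset N$, I obtain ${\rm Null}(\dd_{[\alpha]}(\vartheta^M|_{\BP D^{\perp}}))={\rm d}_{\alpha}\rho_D(N)={\rm d}_{\alpha}\rho_D(P)$. Because $\varpi_M\circ\rho_D=\phi$ and ${\rm d}\phi(E_{\alpha})=0$, the differential of the projection $\BP D^{\perp}\to M$ carries this space onto ${\rm d}_{\alpha}\phi(P)$, which by Proposition \ref{p.KZ}(ii) equals ${\rm Null}(\alpha\circ{\rm Levi}^D_x)$; injectivity along the way is ensured by $P\cap T^{\phi}_{\alpha}=0$ from Proposition \ref{p.KZ}(i). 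The main obstacle is precisely the mismatch in the third paragraph: the form $\dd$ of a contact form only sees the hyperplane $W\cap K$, whereas Proposition \ref{p.KZ} works with ${\rm d}\theta^M$ on all of $W$, and accounting for the extra Euler direction requires both the Liouville identity and the inclusion $P\subset K$.
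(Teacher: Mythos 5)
Your proof is correct and follows essentially the same route as the paper: lift to the punctured cone $D^{\perp}\setminus 0_M$, identify the null space of the lifted form as ${\rm Null}({\rm d}\theta^M|_{T_{\alpha}D^{\perp}})\oplus \C E_{\alpha}$, and push down via Lemma \ref{l.pullback} and Proposition \ref{p.KZ}. The only cosmetic difference is that where the paper closes the equality $N=P\oplus\C E_{\alpha}$ by citing its codimension-one Lemma \ref{l.codim1}, you verify the reverse inclusion directly with the Liouville identity $\iota_{E}\,{\rm d}\theta^M=\theta^M$ and an explicit decomposition --- the same computation in substance.
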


We use the following elementary lemma.

\begin{lemma}\label{l.codim1}
Let $\theta$ be a nowhere vanishing 1-form on a complex manifold $M$ such that   $${\rm Null}(({\rm d} \theta)_x) := \{ v \in T_x M, {\rm d} \theta (u, v) = 0 \mbox{ for all } u \in T_x M\}$$ is contained in ${\rm Ker}(\theta_x)$ for some $x \in M$.  Then ${\rm Null}(({\rm d} \theta)_x)$ is a subspace of ${\rm Null}(\dd_x \theta)$ of codimension at most 1.
\end{lemma}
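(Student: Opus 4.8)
The assertion is pointwise linear algebra at the single point $x$, so the plan is to fix $x$ and work entirely inside $T := T_x M$. Write $B := ({\rm d}\theta)_x$ for the antisymmetric bilinear form on $T$ obtained by evaluating ${\rm d}\theta$ at $x$, and $K := {\rm Ker}(\theta_x) \subset T$ for the hyperplane it defines (a hyperplane because $\theta$ is nowhere vanishing). Specializing Definition \ref{d.form} to the trivial line bundle $L = \sO_M$, the object $\dd_x\theta$ is simply the restriction of $B$ to $\wedge^2 K$; thus $\Null(\dd_x\theta) = \{v \in K : B(v,u) = 0 \text{ for all } u \in K\}$ is the radical of $B|_K$, whereas $\Null(({\rm d}\theta)_x) = \{v \in T : B(v,u) = 0 \text{ for all } u \in T\}$ is the radical of $B$ on the whole of $T$. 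Abbreviating these as $N'$ and $N$, the two things to prove are the containment $N \subseteq N'$ and the estimate $\dim N' - \dim N \le 1$.

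The containment is where the hypothesis enters and should be checked first: if $v \in N$ then $B(v,u) = 0$ for every $u \in T$, hence a fortiori for every $u \in K$, and since by assumption $N \subseteq {\rm Ker}(\theta_x) = K$ we also have $v \in K$, so $v \in N'$. For the dimension estimate I would introduce the musical map $B^\flat : T \to T^\vee$, $v \mapsto B(v,\cdot)$, whose kernel is exactly $N$. Because $K$ has codimension one, its annihilator ${\rm Ann}(K) \subset T^\vee$ is one-dimensional, and the auxiliary subspace $K^\perp := \{v \in T : B(v,\cdot)|_K = 0\}$ is nothing but $(B^\flat)^{-1}({\rm Ann}(K))$. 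Consequently $B^\flat$ carries $K^\perp$ into ${\rm Ann}(K)$ with kernel $N$, inducing an injection $K^\perp / N \hookrightarrow {\rm Ann}(K)$, so that $\dim K^\perp \le \dim N + 1$. Since $N' = K \cap K^\perp \subseteq K^\perp$, this gives $\dim N' \le \dim N + 1$; combined with $N \subseteq N'$ it yields $0 \le \dim N' - \dim N \le 1$, which is the claim.

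I do not anticipate a genuine obstacle: the content is the elementary fact that restricting an antisymmetric form to a hyperplane lowers its rank by at most two. The one subtlety worth flagging is that the hypothesis $\Null(({\rm d}\theta)_x) \subseteq {\rm Ker}(\theta_x)$ is indispensable — it is precisely what forces $N \subseteq N'$, and hence keeps the codimension nonnegative; dropping it allows the radical of $B|_K$ to become strictly smaller than $N$. In fact, since the rank of an antisymmetric form is even, the difference $\dim N' - \dim N = {\rm rk}(B) - {\rm rk}(B|_K) - 1$ is a priori odd, so under the hypothesis the codimension is not merely at most $1$ but exactly $1$; the weaker bound stated in the lemma is all that is needed downstream, and the annihilator argument above delivers it directly without invoking the parity of the rank.
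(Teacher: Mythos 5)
Your argument is correct and is essentially the paper's: both reduce to pointwise linear algebra in $T_xM$, deriving the containment ${\rm Null}(({\rm d}\theta)_x)\subseteq {\rm Null}(\dd_x\theta)$ from the hypothesis exactly as you do, and then bounding the codimension by the single extra linear condition coming from a complement of ${\rm Ker}(\theta_x)$. The paper packages the second step more directly --- fixing $w$ with $\theta(w)\neq 0$ it observes that ${\rm Null}(({\rm d}\theta)_x)=\{v\in {\rm Null}(\dd_x\theta):\ {\rm d}\theta(v,w)=0\}$, i.e.\ $N$ is the kernel of one linear functional on $N'$ --- which is just the dual formulation of your injection $K^{\perp}/N\hookrightarrow {\rm Ann}(K)$; your closing parity observation that the codimension is in fact exactly $1$ is also correct, though not needed.
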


\begin{proof}
It is immediate that ${\rm Null}(({\rm d}\theta)_x)$ is a subspace of ${\rm Null}(\dd_x \theta).$ Fix an element $w \in T_x M$ with $\theta(w) \neq 0$. Then
$${\rm Null}(({\rm d}\theta)_x) = \{ v \in {\rm Null}(\dd_x \theta), {\rm d} \theta (v, w) =0\}.$$ So it has codimension at most 1. \end{proof}

\begin{proof}[Proof of \ref{p.null}]
Put $D^+:= D^{\perp} \setminus (\mbox{0-section})$  and
let $\chi: D^+ \to \BP D^{\perp}$ be the submersion defining the projectivization. Then $${\rm Ker}(\theta^M|_{D^+}) = \chi^{-1} {\rm Ker}(\vartheta^M|_{\BP D^{\perp}})$$
and ${\rm Null}(\dd_{[\alpha]} (\vartheta^M|_{\BP D^{\perp}}))$ is the image of ${\rm Null}(\dd_{\alpha} (\theta^M|_{D^+}))$ under ${\rm d} \chi$ by Lemma \ref{l.inverse}.
Proposition \ref{p.KZ} (i) shows that ${\rm Null} ({\rm d} \theta^M|_{T_{\alpha} D^+}) $ is transversal to $T_{\alpha}^{\chi}$  and Proposition \ref{p.KZ} (ii)  implies  $${\rm Null}({\rm d} \theta^M|_{T_{\alpha} D^+}) + T_{\alpha}^{\chi} \  \subset \ {\rm Null}(\dd_{\alpha} (\theta^M|_{D^+})).$$ This inclusion must be an equality  by Lemma \ref{l.codim1}.
Thus ${\rm Null}(\dd_{[\alpha]} (\vartheta^M|_{\BP D^{\perp}}))$ is the image of ${\rm Null}({\rm d}\theta^M|_{T_{\alpha} D^+})$ which is isomorphic to ${\rm Null}(\alpha \circ {\rm Levi}_x^D) \subset D_x$ by Proposition \ref{p.KZ}. \end{proof}

\section{Contact lines on the Heisenberg group}\label{s.Heis}

\begin{definition}\label{d.Heisenberg}
Let $\I$ be a complex vector space of dimension 1 and let $\W$ be a complex vector space of dimension $2m \geq 4$.
Let $\omega: \wedge^2 \W \to \I$ be a symplectic form  on $\W$.
The {\em Heisenberg group} $\BH$ is the algebraic group whose underlying variety is the affine variety of the vector space $\W \oplus \I$ and whose group multiplication is given by $$(w,z) \cdot (w', z') = (w+w', z+z' + \frac{1}{2} \omega(w,w'))$$ for all $w,w' \in \W$ and $z, z' \in \I$.
The Lie algebra of the Heisenberg group is the {\em Heisenberg algebra}  whose underlying vector space is the direct sum $\W \oplus \I$ and whose Lie bracket is  given by $$[(w, z), (w', z')] = \omega (w, w')$$ for all $w,w' \in \W$ and $z, z' \in \I$.
Let $o =(0,0) \in \BH$ be the identity element of $\BH$. The vector space $\W \subset T_o \BH$ determines a left-invariant subbundle $\sW \subset T \BH$, which is a contact structure on $\BH.$ The left $\BH$-action gives a trivialization of the line bundle $T\BH/\sW \cong \I \times \BH$ such that $\sW = {\rm Ker}(\btheta)$ for a left-invariant 1-form ${\btheta}$ on $\BH$.  \end{definition}

\begin{definition}\label{d.line}
For a nonzero vector $w \in \W$, define the curve $\ell^w_o \subset \BH$ as the underlying variety of the 1-parameter subgroup $\exp(\C w) \subset \W \subset \BH$: $$ \ell^w_o := \{ (tw, 0) \in \BH, t \in \C\}.$$ For any point $x = (u,z) \in \BH$, define $\ell^w_x$ as the locus
$$x \cdot \exp(\C w) = \{(u + tw, z + \frac{t}{2} \omega(u,w), \ t \in \C\}.$$ A $w$-{\em line} on $\BH$ means $\ell^w_x$ for some  $x \in\BH$.
An affine line on $\W \oplus \I$ is called a {\em contact line} if it is a $w$-line for some nonzero $w \in \W.$
Tangent spaces of contact lines determine a line subbundle $\sF \subset T \BP \sW$, i.e., a foliation of rank 1 on $\BP \sW$.
\end{definition}

\begin{definition}\label{d.sL}
For each $[w] \in \BP \W$, let $A_{[w]} \subset \BH$ be the algebraic subgroup $\exp (\C w)$ and let $A \subset \BP \W \times \BH$ be the disjoint union of such subgroups parametrized by $\BP \W$, namely, $$ A := \{([w], g) \in \BP \W \times \BH, \ g \in A_{[w]}\}.$$
We have the projection $\eta: A \to \BP \W$ whose fiber $\eta^{-1}([w]) \subset A$ at  $[w] \in \BP \W$ is isomorphic to $A_{[w]} \subset \BH$ via the projection $A \to \BH$ .  Let $\lambda: \sL \to \BP \W$ be the fiber bundle over $\BP \W$ whose  fiber $\lambda^{-1}([w])$ at $[w] \in \BP \W$ is the coset space $\BH/A_{[w]}$.  In other words, we have a sequence of morphisms
$$ \begin{array}{ccccc}  A & \subset & \BP \W \times \BH &  \to & \sL \\ \downarrow
& & \downarrow & & \downarrow \lambda \\ \BP \W & =  & \BP \W & = & \BP \W\end{array} $$ which gives over each point $[w] \in \BP \W$  the quotient by the subgroup $$ A_{[w]} \subset \BH \to \BH/A_{[w]}.$$
By the left $\BH$-action, there is a natural trivialization of the projective bundle on $\BH$ \begin{equation}\label{e.PW}  \BP \sW \ \stackrel{\cong}{\longrightarrow} \ \BP \W \times \BH. \end{equation} The pair of morphisms $(\varrho, \upsilon)$ given by the projection $\varrho: \BP \sW \to \sL$ induced by (\ref{e.PW}) and the natural projection $ \upsilon: \BP \sW \to \BH$ can be viewed as the universal family of contact lines on $\BH$. In other words, we can regard  $\sL$ as the set of all contact lines on $\BH$.  The fiber $\lambda^{-1}([w])$ of $\lambda: \sL \to \BP \W$ corresponds to the set of all $w$-lines  on $\BH$.
\end{definition}

\begin{definition}\label{d.Legendrian}
For a complex submanifold $S \subset \BP \W$ (not necessarily closed), denote by $S^+$ the set of nonzero vectors in $\W$ corresponding to points of $S$, i.e., the affine cone over $S$ with the zero point removed. \begin{itemize}
\item[(1)] We say that $S$ is  a {\em Legendrian submanifold} if $\dim S = m-1$ and for each $\alpha \in S^+$, the affine tangent space $T_{\alpha} S^+ \subset \W$ satisfies $$ \omega(\alpha, T_{\alpha} S^+) = 0.$$ \item[(2)] Define for $\alpha \in \W$,  $$\alpha^{\perp \omega} := \{ w \in \W, \omega (\alpha, w) =0\}. $$ Then for a  Legendrian submanifold $S \subset \BP \W$ and a point $\alpha \in S^+$, we have $ T_{\alpha} S^+ \subset \alpha^{\perp \omega}.$
\item[(3)] An $S$-{\em line} on $\BH$ means a $w$-line for some $w \in S^+$.
The tangent spaces of $S$-lines form a fiber subbundle $\sS \subset \BP \sW$ with the natural projection $\mu: \sS \to \BH$, which corresponds to $S \times \BH$ under (\ref{e.PW}) in Definition \ref{d.sL}. \item[(4)]
In (3), define $\sL^S \subset \sL$ as the image $\varrho(\sS)$ and $\rho: \sS \to \sL^S$ as the restriction to $\sS$ of $\varrho: \BP \sW \to \sL$ in Definition \ref{d.sL}. Then the pair of morphisms  $(\rho, \mu)$ can be viewed as the universal family of $S$-lines on $\BH$. \item[(5)] In (3),
denote by $\sS^+ \subset \sW \setminus (\mbox{ 0-section })$, the left-invariant fiber bundle with each fiber isomorphic to $S^+$.  We have  the natural projection $\chi: \sS^+ \to \sS$ which is a $\C^{\times}$-bundle.
 \end{itemize} \end{definition}

\begin{definition}\label{d.R}
Using the terminology of Definition \ref{d.Legendrian}, assume that $S \subset \BP \W$ is a  Legendrian submanifold.
Define a distribution $R \subset \mu^{-1} \sW \subset T\sS$ on $\sS$ as follows. At a point $[\alpha] \in \sS$ with $\alpha \in \sS^+ \subset \sW$ and $\mu([\alpha]) = x \in \BH$, the fiber $R_{[\alpha]}$ is
$$R_{[\alpha]}:= \{ v \in (\mu^{-1} \sW)_{[\alpha]} \subset T_{[\alpha]}\sS, \ {\rm d} {\btheta} ({\rm d} \mu (v), \alpha) = 0\}$$ where ${\btheta}$ is the left-invariant 1-form on $\BH$ in Definition \ref{d.Heisenberg} and $\alpha$ is viewed as a nonzero vector in $\sW_x \subset T_x \BH$. Then $R$ is a vector subbundle in $T\sS$ of corank 2. \end{definition}

We state two results on the distribution $R$ on $\sS$:

\begin{proposition}\label{p.Q}
In Definition \ref{d.R}, there exists a vector subbundle $Q \subset T \sL^S$ of corank 2 such that $R = \rho^{-1} Q$. \end{proposition}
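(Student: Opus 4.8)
The plan is to deduce Proposition \ref{p.Q} directly from Lemma \ref{l.quotient}, applied to the submersion $\rho:\sS\to\sL^S$. Its fibers are the lifts to $\sS$ of the $S$-lines on $\BH$: by the description of $\varrho$ through (\ref{e.PW}), the fiber through a point $[\alpha]\in\sS$ with $\mu([\alpha])=x$ and $\alpha\in\sS^+\subset\sW_x$ is the curve $s\mapsto([\alpha],x\cdot\exp(sw))$, where $w\in S^+$ represents $\alpha$; it keeps the $S$-direction $[\alpha]$ fixed and moves the base point along the coset $x\cdot\exp(\C w)$. These fibers are isomorphic to $\exp(\C w)\cong\C$, hence connected, and $\rho$ is a submersion. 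So it remains to verify the two hypotheses of Lemma \ref{l.quotient}: that $T^\rho\subset R$, and that ${\rm Levi}^R_{[\alpha]}(u,v)=0$ for all $u\in T^\rho_{[\alpha]}$ and $v\in R_{[\alpha]}$.

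The inclusion $T^\rho\subset R$ is immediate. Differentiating the fiber $s\mapsto([\alpha],x\exp(sw))$ shows that ${\rm d}\mu$ maps $T^\rho_{[\alpha]}$ onto the line $\C\alpha\subset\sW_x$ spanned by the tangent to the contact line. Hence any $v\in T^\rho_{[\alpha]}$ lies in $\mu^{-1}\sW$, and since ${\rm d}\mu(v)\in\C\alpha$ the antisymmetry of ${\rm d}\btheta$ gives ${\rm d}\btheta({\rm d}\mu(v),\alpha)=0$, so $v\in R_{[\alpha]}$ by Definition \ref{d.R}.

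The Levi condition is the main point, and I would establish it by showing that $R$ is invariant under the flow along the fibers of $\rho$. Let $\{\Phi_s\}$ be the local one-parameter group generated by a section $u$ of $T^\rho$; along a single fiber it has the form $([\alpha],x)\mapsto([\alpha],x\exp(sw))$, so it fixes the $S$-direction and moves the base point by right translation $g\mapsto g\exp(sw)$ on $\BH$. Transporting back to $\W$ by the left-invariant trivialization of $\sW$, the induced map on $\sW_x\cong\W$ is the adjoint action ${\rm Ad}_{\exp(-sw)}$. In the Heisenberg algebra ${\rm ad}_w$ sends $u'\in\W$ to $\omega(w,u')\in\I$ and annihilates $\I$, so ${\rm ad}_w^2=0$ and ${\rm Ad}_{\exp(-sw)}={\rm id}-s\,{\rm ad}_w$. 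The crucial observation is that ${\rm ad}_w$ vanishes precisely on $w^{\perp\omega}=\alpha^{\perp\omega}$. Now a vector $v\in R_{[\alpha]}$ satisfies ${\rm d}\mu(v)\in\sW_x$ with image in $\alpha^{\perp\omega}$ — this being exactly the content of ${\rm d}\btheta({\rm d}\mu(v),\alpha)=0$, since ${\rm d}\btheta|_{\sW}$ corresponds to $\omega$ under the trivialization — so the transported vector is fixed by ${\rm Ad}_{\exp(-sw)}$: no $\I$-component is created, hence it stays in $\sW$, and it stays in $\alpha^{\perp\omega}$. Therefore ${\rm d}\Phi_s$ preserves $R$ along the fiber, which gives ${\rm Lie}_u v=[u,v]\in R$ and hence ${\rm Levi}^R_{[\alpha]}(u,v)=0$.

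The main obstacle is exactly this flow-invariance step: it rests on the identity ${\rm Ad}_{\exp(-sw)}={\rm id}-s\,{\rm ad}_w$ together with the fact that the defining condition of $R$ — membership of ${\rm d}\mu(v)$ in $\alpha^{\perp\omega}$ — is precisely what forces ${\rm ad}_w$ to annihilate the transported vector, so that right translation preserves both the contact plane $\sW$ and the perpendicularity condition. Making this rigorous requires simultaneously tracking how right translation acts on $\mu^{-1}\sW$ and on the $S$-directions, and this is where the explicit coordinates of Section \ref{s.coordi} enter; I would defer the computational verification there. Granting it, Lemma \ref{l.quotient} yields a distribution $Q\subset T\sL^S$ with $R=\rho^{-1}Q$, and since $\rho$ is a submersion the corank of $Q$ equals that of $R$, namely $2$.
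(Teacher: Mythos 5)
Your strategy coincides with the paper's: reduce to Lemma \ref{l.quotient} applied along the fibres of $\rho$ and verify that $R$ is Levi-flat in the fibre direction. Your identification of the fibres of $\rho$, the inclusion $T^\rho\subset R$, and the formula ${\rm Ad}_{\exp(-sw)}={\rm id}-s\,{\rm ad}_w$ for the action of right translation on the left-trivialized tangent spaces are all correct. The gap is in the flow-invariance step. Writing $T_{([w],x)}\sS\cong T_{[w]}S\oplus T_x\BH$ via the trivialization (\ref{e.PW}), your computation of ${\rm d}\Phi_s$ accounts only for the $T_x\BH$-component of a vector $v\in R_{[\alpha]}$: since $\mu\circ\Phi_s([w'],x')=x'\exp(sw')$ depends on $[w']$ as well as on $x'$, one has ${\rm d}\mu({\rm d}\Phi_s(v))={\rm d}R_{\exp(sw)}({\rm d}\mu(v))$ \emph{plus} a cross-term proportional to $s\,\dot w$, where $\dot w\in T_wS^+$ lifts the $S$-component of $v$. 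Whether this extra term creates no $\I$-component and stays inside the left-translate of $w^{\perp\omega}$ is exactly the question of whether $\omega(w,\dot w)=0$ and $T_wS^+\subset w^{\perp\omega}$; that is, it is precisely where the Legendrian hypothesis on $S$ must enter. Your argument as carried out never invokes that hypothesis in the Levi step, and the proposition is false without it: for a non-isotropic $S$ the cross-term leaves $R$ and $R$ does not descend. So the step you defer as ``computational verification'' is not a routine check but the crux of the proof.

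For comparison, the paper performs the same verification upstairs on $\sS^+$ in coordinates: the part you did establish corresponds to ${\rm Lie}_F\theta=2\zeta$, which lies in the ideal generated by the defining forms of $\chi^{-1}R$, while the missing cross-term is exactly ${\rm Lie}_F\zeta=\sum_{j}(\lambda_j\,{\rm d}\lambda_{m+j}-\lambda_{m+j}\,{\rm d}\lambda_j)$, which vanishes on $\sS^+$ by Proposition \ref{p.vecf}(ii), i.e.\ by the Legendrian condition. Your group-theoretic framing is viable and would give a coordinate-free proof once this term is computed and shown to lie in $R$.
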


\begin{proposition}\label{p.R}
In Definition \ref{d.R},
 the subspace ${\rm Null}(\alpha \circ {\rm Levi}_y^R) \subset R_y$ has dimension $m$ for  any $y \in \sS$ and any nonzero vector $\alpha \in R_y^{\perp}$. \end{proposition}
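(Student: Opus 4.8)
The plan is to compute the antisymmetric form $\beta\circ{\rm Levi}^R_y$ (where $\beta\in R_y^{\perp}$ is the covector written $\alpha$ in the statement, and I keep $\alpha\in\sS^+\subset\sW$ for the tautological vector of Definition \ref{d.R}) directly from two explicit $1$-forms cutting out $R$, and then to read off the dimension of its null space by linear algebra. The reduction rests on the tensorial Cartan identity: if $R=\Ker\eta_1\cap\Ker\eta_2$ with $\eta_1,\eta_2$ independent, and $\eta_\beta=c_1\eta_1+c_2\eta_2$ represents $\beta$ at $y$, then $\beta\circ{\rm Levi}^R_y(u,v)=-\,{\rm d}\eta_\beta(u,v)$ for all $u,v\in R_y$, since $\eta_\beta$ annihilates $R$. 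Thus ${\rm Null}(\beta\circ{\rm Levi}^R_y)={\rm Null}({\rm d}\eta_\beta|_{R_y})$, and everything becomes the study of a single explicit $2$-form on $R_y$, uniformly in $(c_1,c_2)\neq(0,0)$.

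First I would fix a symplectic basis $e_1,\dots,e_m,f_1,\dots,f_m$ of $\W$, with induced linear coordinates $(x_i,y_i,z)$ on $\BH=\W\oplus\I$, in which the left-invariant form is $\btheta=\dd z-\tfrac12\sum_i(x_i\,\dd y_i-y_i\,\dd x_i)$ and ${\rm d}\btheta=-\sum_i\dd x_i\wedge \dd y_i$. Choosing a local holomorphic map $\sigma\colon U\to S^+\subset\W$ parametrizing $S$ near $[\alpha_0]$ (with $\sigma(s)=\sum_i(A_i(s)e_i+B_i(s)f_i)$), the trivialization (\ref{e.PW}) identifies a neighborhood in $\sS$ with $U\times\BH$, with $\mu$ the second projection and $\alpha=\sigma(s)$ the tautological vector. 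Then $R$ is cut out by $\eta_1:=\mu^*\btheta$ and $\eta_2$, where $\eta_2(v):={\rm d}\btheta(\dd\mu(v),\sigma(s))=-\sum_i(B_i\,\dd x_i-A_i\,\dd y_i)$; since $\Ker\eta_1=\mu^{-1}\sW$ and $\eta_2$ encodes exactly ${\rm d}\btheta(\dd\mu(v),\alpha)=0$, these are independent and annihilate precisely $R$. I would frame $R_y$ as $P\oplus H$, with $P$ the vertical directions $\partial_{s_k}$ and $H$ the left-invariant horizontal lifts $\hat w$ of $w\in\alpha^{\perp\omega}$.

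Now I would evaluate the exterior derivatives on this frame. Because both ${\rm d}\btheta$ and $\eta_2$ carry no $\dd z$-leg, the central components of the fields $\hat w$ drop out of every pairing. One finds that ${\rm d}\eta_1=-\sum_i\dd x_i\wedge\dd y_i$ is supported on $H\times H$ with ${\rm d}\eta_1(\hat w,\hat w')=-\omega(w,w')$, while ${\rm d}\eta_2=-\sum_{k,i}(\partial_{s_k}B_i\,\dd s_k\wedge\dd x_i-\partial_{s_k}A_i\,\dd s_k\wedge\dd y_i)$ is supported on $P\times H$ with ${\rm d}\eta_2(\partial_{s_k},\hat w)=-\omega(w,\partial_{s_k}\sigma)$; in particular ${\rm d}\eta_2$ vanishes on $P\times P$ (no $\dd s\wedge\dd s$ term) and on $H\times H$. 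Hence the equations for $(p,w)\in\C^{m-1}\times\alpha^{\perp\omega}$ defining an element $\sum_k p_k\partial_{s_k}+\hat w$ of the null space split into a $P$-pairing forcing $\omega(w,\partial_{s_k}\sigma)=0$ for all $k$, and an $H$-pairing reading $\omega\bigl(w',\,c_1w-c_2\textstyle\sum_k p_k\partial_{s_k}\sigma\bigr)=0$ for all $w'\in\alpha^{\perp\omega}$.

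The conceptual crux is that the Legendrian hypothesis upgrades $T_\alpha S^+\subset\alpha^{\perp\omega}$ to a Lagrangian subspace: differentiating $\omega(\sigma,\partial_{s_l}\sigma)=0$ along $S^+$ and antisymmetrizing gives $\omega(\partial_{s_k}\sigma,\partial_{s_l}\sigma)=0$, so $T_\alpha S^+$ is $m$-dimensional isotropic and $(T_\alpha S^+)^{\perp\omega}=T_\alpha S^+$. The $P$-pairing, combined with $\omega(w,\alpha)=0$, then says exactly $\omega(w,T_\alpha S^+)=0$, i.e. $w\in T_\alpha S^+$. Writing $w=q_0\alpha+\sum_k q_k\,\partial_{s_k}\sigma$ and using $\C\alpha=(\alpha^{\perp\omega})^{\perp\omega}$, the $H$-pairing reduces to $c_1q_k=c_2p_k$ for $k=1,\dots,m-1$, with $q_0$ free; counting $2m-1$ parameters against $m-1$ independent constraints yields dimension $m$, independently of $(c_1,c_2)$. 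I expect the main obstacle to be the honest bookkeeping in computing ${\rm d}\eta_2$ for the moving section $\sigma(s)$ (so that $A_i,B_i$ genuinely depend on $s$) and in confirming that $\eta_1,\eta_2$ cut out precisely $R$; by contrast the decisive geometric input is the single observation that the Legendrian property makes $T_\alpha S^+$ Lagrangian, which is exactly what pins the null-space dimension to $m$.
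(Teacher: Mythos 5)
Your proposal is correct in substance and its answer agrees with the paper, but it is worth recording both how it differs organizationally and where one step of your uniform argument silently fails. The paper performs the same underlying computation --- reduce $\beta\circ{\rm Levi}^R_y$ to ${\rm d}$ of a combination of two defining $1$-forms via the Cartan identity, then do linear algebra --- but it lifts everything to the cone bundle $\sS^+$ via $\chi$ (proving the null space there has dimension $m+1$ and descending by Lemma \ref{l.pullback}), normalizes the point $z$ using the $\BH$-action and the transitivity of $\Sp(\W,\omega)$ on Lagrangians, and then computes in explicit Darboux coordinates, treating the covectors $\theta_z$ and $(\zeta+c\theta)_z$ as two separate cases. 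You instead stay on $\sS$ with a local chart $\sigma$ of $S$, split $R_y=P\oplus H$ into vertical and horizontal parts, and phrase the whole computation invariantly through $\omega$, with the Legendrian-implies-Lagrangian observation for $T_\alpha S^+$ carrying the weight that the paper builds into its choice of coordinates. Your version is arguably cleaner and makes the role of the Lagrangian condition more visible; the paper's version avoids any case-by-case frame bookkeeping by brute normalization.

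The one genuine lacuna is your claim that the $P$-pairing \emph{forces} $\omega(w,\partial_{s_k}\sigma)=0$: the actual condition is $c_2\,\omega(w,\partial_{s_k}\sigma)=0$, which is vacuous when $c_2=0$, i.e.\ precisely for $\beta$ proportional to $(\mu^*\btheta)_y$ --- the case the paper isolates as $\beta=\theta_z$. Your subsequent parameter count ``independently of $(c_1,c_2)$'' therefore does not follow as written for that covector. The conclusion survives: when $c_2=0$ and $c_1\neq0$ the $H$-pairing alone reads $\omega(w',c_1w)=0$ for all $w'\in\alpha^{\perp\omega}$, hence $w\in(\alpha^{\perp\omega})^{\perp\omega}=\C\alpha$ with $p$ unconstrained, giving dimension $(m-1)+1=m$ again. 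Insert that one-line case distinction and the proof is complete.
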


Postponing the proofs of Propositions \ref{p.Q} and \ref{p.R} to Section \ref{s.coordi}, we derive the following consequence.

\begin{corollary}\label{c.N}
For the distribution $Q$  on $\sL^S$ from Proposition \ref{p.Q},
 $$ \dim {\rm Null}(\dd_{[\beta]}(\vartheta^{\sL^S}|_{\BP Q^{\perp}})) = m-1$$ for any nonzero $\beta \in  Q^{\perp}$.
\end{corollary}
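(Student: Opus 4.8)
The plan is to turn the $\dd$-null space on $\BP Q^\perp$ into a null space of the Levi tensor of $Q$ via Proposition \ref{p.null}, then transport the computation to $\sS$ using the identity $R=\rho^{-1}Q$ from Proposition \ref{p.Q}, and finally read off the dimension from Proposition \ref{p.R}. In this way the corollary becomes a formal consequence of the two assumed propositions, the only analytic input being Proposition \ref{p.null}.

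First I would apply Proposition \ref{p.null} with $M=\sL^S$ and the corank-$2$ distribution $D=Q$, whose annihilator $Q^\perp\subset T^\vee\sL^S$ is a rank-$2$ subbundle. For a nonzero $\beta\in Q_w^\perp$ lying over $w\in\sL^S$, it supplies a natural isomorphism
$$ {\rm Null}(\dd_{[\beta]}(\vartheta^{\sL^S}|_{\BP Q^\perp})) \ \cong \ {\rm Null}(\beta\circ{\rm Levi}_w^Q)\subset Q_w, $$
so it suffices to show $\dim{\rm Null}(\beta\circ{\rm Levi}_w^Q)=m-1$ for every nonzero $\beta\in Q_w^\perp$.

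Next I would relate ${\rm Levi}^Q$ to ${\rm Levi}^R$ through Lemma \ref{l.inverse}. Choose $y\in\rho^{-1}(w)$ and set $\pi:={\rm d}_y\rho|_{R_y}\colon R_y\to Q_w$; since $\rho$ is a submersion and $R=\rho^{-1}Q$, the map $\pi$ is surjective with kernel $T_y^\rho\subset R_y$. Under the induced isomorphism $\rho_y\colon (T\sS/R)_y\to(T\sL^S/Q)_w$, the pullback $\alpha:=\beta\circ\rho_y$ is a nonzero element of $R_y^\perp$, and Lemma \ref{l.inverse} yields
$$ \alpha\circ{\rm Levi}_y^R \ = \ \pi^*(\beta\circ{\rm Levi}_w^Q). $$
For a surjective $\pi$ the null space of a pulled-back antisymmetric form satisfies ${\rm Null}(\pi^*B)=\pi^{-1}({\rm Null}(B))$, whence
$$ \dim{\rm Null}(\alpha\circ{\rm Levi}_y^R) \ = \ \dim{\rm Null}(\beta\circ{\rm Levi}_w^Q)+\dim T_y^\rho. $$
As $\beta$ ranges over nonzero elements of $Q_w^\perp$, so does $\alpha$ over nonzero elements of $R_y^\perp$, and Proposition \ref{p.R} forces the left-hand side to equal $m$.

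Finally I would compute $\dim T_y^\rho$. Since $(\rho,\mu)$ is the universal family of $S$-lines (Definition \ref{d.Legendrian}(4)), the fibers of $\rho$ are the $S$-lines themselves, each a coset of a $1$-parameter subgroup $\exp(\C w)$ and hence one-dimensional, so $\dim T_y^\rho=1$. Substituting gives $\dim{\rm Null}(\beta\circ{\rm Levi}_w^Q)=m-1$, which together with the first step proves the corollary. I expect the only delicate point to be the bookkeeping in the middle step: one must check that $\pi$ is onto with kernel exactly $T_y^\rho$, that $\alpha=\beta\circ\rho_y$ exhausts $R_y^\perp$ so that Proposition \ref{p.R} genuinely applies to every relevant covector, and that the universal-family fibers of $\rho$ are one-dimensional. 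Granting Propositions \ref{p.Q}, \ref{p.R} and \ref{p.null}, everything else is formal.
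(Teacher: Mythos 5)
Your argument is correct and rests on the same three pillars as the paper's (Proposition \ref{p.null}, Proposition \ref{p.R}, and the fact that $\rho$ has one-dimensional fibers), but it traverses the commutative square in the opposite direction. The paper lifts the contact form: it uses the isomorphism $\rho^*\BP Q^{\perp}\cong\BP R^{\perp}$ to get a submersion $\eta:\BP R^{\perp}\to\BP Q^{\perp}$ of relative dimension $1$ with $\eta^*(\vartheta^{\sL^S}|_{\BP Q^{\perp}})=\vartheta^{\sS}|_{\BP R^{\perp}}$, applies Lemma \ref{l.pullback} to trade the $\dd$-null space on $\BP Q^{\perp}$ for the one on $\BP R^{\perp}$ at the cost of $+1$, and only then invokes Proposition \ref{p.null} --- on $\sS$ --- to land on ${\rm Null}(\alpha\circ{\rm Levi}^R_y)$. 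You instead apply Proposition \ref{p.null} immediately on $\sL^S$ and push the comparison down to the Levi tensors via Lemma \ref{l.inverse}, using the elementary identity ${\rm Null}(\pi^*B)=\pi^{-1}({\rm Null}(B))$ for the surjection $\pi={\rm d}_y\rho|_{R_y}:R_y\to Q_w$ with kernel $T^{\rho}_y$. The bookkeeping points you flag all check out: $\pi$ is onto with kernel exactly $T^{\rho}_y$ because $T^{\rho}\subset R=\rho^{-1}Q$, the dual of the isomorphism $\rho_y$ identifies $Q_w^{\perp}$ with $R_y^{\perp}$ so Proposition \ref{p.R} applies to every covector you produce, and $\dim T^{\rho}_y=1$ since $\dim\sS=3m$ while $\dim\sL^S=3m-1$. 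What your route buys is that you never need the global identification of projectivized conormal bundles or the pullback identity (\ref{e.eta}); what the paper's route buys is that (\ref{e.eta}) and the diagram are needed again anyway in the proof of Theorem \ref{t.precise}, so setting them up here costs nothing. Either way the corollary is a formal consequence of Propositions \ref{p.Q} and \ref{p.R}.
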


\begin{proof}
From $R = \rho^{-1}Q$ in Proposition \ref{p.Q}, the differential of  the submersion $\rho: \sS \to \sL^S$ induces an  isomorphism of projective bundles on $\sS$ $$\rho^*\BP Q^{\perp} \stackrel{\cong}{\longrightarrow} \BP R^{\perp}.$$ This induces  a submersion  $\eta: \BP R^{\perp} \to \BP Q^{\perp}$ of relative dimension 1 satisfying the commutative diagram
$$ \begin{array}{ccccccc}
\BP T^{\vee} \sS & \supset & \BP R^{\perp} & \stackrel{\eta}{\to} & \BP Q^{\perp} & \subset & \BP T^{\vee} \sL^S \\ \varpi_{\sS} \downarrow & &
\downarrow & & \downarrow & & \downarrow \varpi_{\sL^S} \\
\sS & = & \sS & \stackrel{\rho}{\to} & \sL^S & = & \sL^S. \end{array} $$
As $\eta$ comes from the derivative of $\rho$, we have \begin{equation}\label{e.eta} \eta^* (\vartheta^{\sL^S}|_{\BP Q^{\perp}}) = \vartheta^{\sS}|_{\BP R^{\perp}}.\end{equation}
By Lemma \ref{l.pullback} and (\ref{e.eta}),  it suffices to prove that
$$\dim {\rm Null} (\dd_{[\alpha]} (\vartheta^{\sS}|_{\BP R^{\perp}})) = m$$ for all $[\alpha] \in \BP R^{\perp}$. By Proposition \ref{p.null}, this follows from $\dim {\rm Null}(\alpha \circ {\rm Levi}_y^R) = m$ in Proposition \ref{p.R}. \end{proof}

The following is a more precise version of Theorem \ref{t.main}.

\begin{theorem}\label{t.precise}
In the setting of Propositions \ref{p.Q} and \ref{p.R}, choose a point $w \in S^+$ and a $w$-line $\ell$ on $\BH$. Then there exist
\begin{itemize}
\item[(1)] a complex manifold $X$ of dimension $2m+1$ with a contact structure $D \subset TX$;
\item[(2)] an unbendable rational curve $C \subset X$ subordinate to $D$;
\item[(3)] an (Euclidean) open neighborhood $U \subset \BH$ of $\ell$; and
\item[(4)]  an open embedding $U \subset X$
\end{itemize}
such that $ C \cap U = \ell, D|_U = \sW|_U$ and the VMRT $\sC^C_x$ for $x \in U \cap C$ is projectively isomorphic to the germ $[w] \in S$ corresponding to $w \in S^+$. \end{theorem}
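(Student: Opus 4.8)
The plan is to obtain $(X,D)$ as the symplectic reduction of Lemma~\ref{l.reduction} applied to $\BP Q^{\perp}$, and then to recover a neighbourhood of $\ell$ in $\BH$ inside $X$ as the ``affine part'' of the resulting partial completion. For Step~1, I apply Lemma~\ref{l.reduction} to $Y=\BP Q^{\perp}\subset\BP T^{\vee}\sL^S$ with the $\sO_{\BP Q^{\perp}}(1)$-valued $1$-form $\vartheta:=\vartheta^{\sL^S}|_{\BP Q^{\perp}}$. Since $\sS\cong S\times\BH$ has dimension $3m$ and $\rho$ has one-dimensional fibres, $\dim\sL^S=3m-1$ and $\dim\BP Q^{\perp}=3m$. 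By Definition~\ref{d.Levi}(i) one has ${\rm Levi}^{{\rm Ker}(\vartheta)}=-\dd\vartheta$, so Corollary~\ref{c.N} gives $\dim{\rm Null}({\rm Levi}^{{\rm Ker}(\vartheta)}_{[\beta]})=m-1$ for every nonzero $\beta\in Q^{\perp}$, verifying hypothesis~(1) with $k=m-1$. For the compact submanifold I take $Z:=\varpi_{\sL^S}^{-1}(\ell)\cap\BP Q^{\perp}\cong\BP^{1}$, the fibre over the chosen $S$-line $\ell\in\sL^S$; hypothesis~(2) holds because Proposition~\ref{p.null} shows that ${\rm d}\varpi_{\sL^S}$ maps ${\rm Null}(\dd_{[\beta]}\vartheta)$ isomorphically into $T_{\ell}\sL^S$, so this null space is transversal to the vertical space $T_{[\beta]}Z$. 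Lemma~\ref{l.reduction} then yields a submersion $\nu\colon O\to X$ from a neighbourhood $O$ of $Z$ onto a contact manifold $(X,D)$ of dimension $3m-(m-1)=2m+1$, with ${\rm Ker}(\vartheta)|_O=\nu^{-1}D$ and $T^{\nu}=\sN$ the null foliation. I set $C:=\nu(Z)$; as $Z$ is transversal to the $(m-1)$-dimensional leaves of $\sN$, after shrinking $O$ the map $\nu|_Z$ is an embedding and $C\cong\BP^{1}$.

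Step~2, the construction of the open embedding $\BH\supset U\hookrightarrow X$, is the geometric heart. The pull-back $\mu^{*}\btheta$ lies in $R^{\perp}$ (it annihilates $R\subset\mu^{-1}\sW={\rm Ker}(\mu^{*}\btheta)$) and is nowhere vanishing since $\mu$ is a submersion and $\btheta$ is nowhere vanishing. Because $R=\rho^{-1}Q$ (Proposition~\ref{p.Q}) identifies $R^{\perp}$ with $\rho^{*}Q^{\perp}$, the fibrewise class of $\mu^{*}\btheta$ defines a morphism $\Psi\colon\sS\to\BP Q^{\perp}$ with $\varpi_{\sL^S}\circ\Psi=\rho$. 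Since $\mu^{*}\btheta$ annihilates $T^{\rho}$, arguing as in Lemma~\ref{l.sigma} identifies $\Psi^{*}\vartheta$ with $\mu^{*}\btheta$ up to the tautological twist, so that $\Psi^{-1}{\rm Ker}(\vartheta)=\mu^{-1}\sW$. The crucial claim is that ${\rm d}\Psi$ carries the $\mu$-vertical bundle $T^{\mu}={\rm Ker}({\rm d}\mu)$, of rank $m-1$, into $\sN$; granting this (and that $\Psi$ is a local biholomorphism near the lift $\tilde\ell=\{(x,[w]):x\in\ell\}$, both spaces having dimension $3m$), one gets ${\rm Ker}({\rm d}(\nu\circ\Psi))=T^{\mu}$, so $\nu\circ\Psi$ is constant along the connected fibres of $\mu$ and descends to $\bar\Psi\colon U\to X$ on a neighbourhood $U$ of $\ell$. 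As $\bar\Psi$ is then a local biholomorphism of the $(2m+1)$-dimensional $\BH$ and $X$, it is an open embedding after shrinking $U$, and $\Psi^{-1}{\rm Ker}(\vartheta)=\mu^{-1}\sW$ descends to $D|_U=\sW|_U$. Finally $\Psi(\tilde\ell)\subset Z$ because $\rho(\tilde\ell)=\ell$, so $\bar\Psi(\ell)\subset\nu(Z)=C$; the affine line $\ell\cong\C$ fills $C$ minus the point(s) of $\BP^{1}$ it omits, which is exactly $C\cap U=\ell$ with the remaining point(s) of $C$ at infinity.

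For Step~3, set $C_{\ell'}:=\nu(\varpi_{\sL^S}^{-1}(\ell')\cap\BP Q^{\perp})$ for each $S$-line $\ell'\in\sL^S$ near $\ell$, so $C=C_{\ell}$; the computation of Step~2 gives $C_{\ell'}\cap U=\ell'$ under $\bar\Psi$, so these are deformations of $C$ tangent to $D$, forming a smooth family over $\sL^S$ of dimension $3m-1=(\dim X-1)+(m-1)$, matching (\ref{e.p}) with $p=m-1$. To see $C$ is unbendable I would compute $N_{C/X}$ from the reduction: since $Z$ is a fibre of the $\BP^{1}$-bundle $\varpi_{\sL^S}$ one has $T_{\BP Q^{\perp}}|_Z\cong\sO_{\BP^{1}}(2)\oplus\sO_{\BP^{1}}^{\oplus(3m-1)}$, and ${\rm d}\nu$ identifies $TX|_C$ with the quotient by $\sN|_Z$, so the statement reduces to the splitting $\sN|_Z\cong\sO_{\BP^{1}}(-1)^{\oplus(m-1)}$, giving $N_{C/X}\cong\sO_{\BP^{1}}(1)^{\oplus(m-1)}\oplus\sO_{\BP^{1}}^{\oplus(m+1)}$; hence $C$ is unbendable and subordinate to $D$, and the family realises the Douady neighbourhood $\sK^C$. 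For the VMRT, fix $x\in\ell$: the deformations $C_{\ell'}$ through $x$ are exactly those with $\ell'$ an $S$-line through $x$, namely $\ell'=\ell^{v}_x$ with $[v]\in S$, and $\ell^{v}_x$ has constant tangent direction $[v]$ in the trivialisation $\BP\sW_x\cong\BP\W$ of~(\ref{e.PW}). Since $\bar\Psi$ matches $\sW_x$ with $D_x$, the VMRT $\sC^C_x\subset\BP D_x\cong\BP\W$ is precisely the germ of $S$ at $[w]$, the direction of $\ell$ at $x$.

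I expect the main obstacle to be the claim in Step~2 that ${\rm d}\Psi(T^{\mu})\subset\sN$, equivalently that the reduction collapses each copy of $S$ lying over a point of $\BH$. This is the step that genuinely uses the Legendrian hypothesis, through the dimension count $\dim{\rm Null}(\alpha\circ{\rm Levi}^R_y)=m$ of Proposition~\ref{p.R} and its transfer to $\sN$ via Proposition~\ref{p.null}; together with the local-biholomorphism property of $\Psi$ and the splitting $\sN|_Z\cong\sO_{\BP^{1}}(-1)^{\oplus(m-1)}$ needed in Step~3, it is presumably where the explicit coordinate computations of Section~\ref{s.coordi} are required.
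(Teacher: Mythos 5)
Your overall architecture coincides with the paper's: reduction of $\BP Q^{\perp}$ via Lemma \ref{l.reduction} along the fibre $Z=\psi^{-1}([\ell])$, and the map $\Psi:\sS\to\BP Q^{\perp}$ determined by the class of $\mu^*\btheta$ in $R^{\perp}\cong\rho^*Q^{\perp}$ (the paper's $\iota=\eta\circ\sigma$), which descends to the open embedding $U\hookrightarrow X$. Two remarks on Step 2. First, the facts you ask to be ``granted'' need no further coordinate work: from $\Psi^*(\vartheta^{\sL^S}|_{\BP Q^{\perp}})=\mu^*\btheta$ (Lemma \ref{l.sigma} combined with $\eta^*(\vartheta^{\sL^S}|_{\BP Q^\perp})=\vartheta^{\sS}|_{\BP R^\perp}$), Lemma \ref{l.pullback} applied to the submersion $\mu$ gives $\dim{\rm Null}(\dd(\mu^*\btheta))=m-1=\dim T^{\mu}$, and comparison with Corollary \ref{c.N} forces $\Psi$ to be a local biholomorphism; the inclusion ${\rm d}\Psi(T^{\mu})\subset\sN$ is then automatic because $T^{\mu}={\rm Null}(\dd(\mu^*\btheta))$ is carried into ${\rm Null}(\dd(\vartheta^{\sL^S}|_{\BP Q^{\perp}}))=\sN$. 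The genuinely computational input is confined to Propositions \ref{p.Q} and \ref{p.R}, which the theorem already assumes. Second, to conclude $C\cap U=\ell$ you need $\Psi$ to be injective, not merely locally biholomorphic; the paper gets this from $\BH$-equivariance and the action of the subgroups $A_{[v]}$, which show that $\Psi$ embeds each fibre $\rho^{-1}(z)$ onto an affine open subset of the line $\psi^{-1}(z)$, so that $\BP Q^{\perp}\setminus\Psi(\sS)$ is a section of $\psi$. Your proposal does not address injectivity.

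The real gap is Step 3. You reduce unbendability to the splitting $\sN|_Z\cong\sO_{\BP^1}(-1)^{\oplus(m-1)}$, which you do not prove; and even granting it, the splitting type of $N_{C/X}\cong N_{Z/O}/(\sN|_Z)\cong\sO_{\BP^1}^{\oplus(3m-1)}/\sN|_Z$ is not determined by that of $\sN|_Z$: a quotient of a trivial bundle by a subbundle isomorphic to $\sO(-1)^{\oplus(m-1)}$ has nonnegative summands of total degree $m-1$, but could a priori contain a summand of degree $\geq 2$ depending on how the subbundle sits. So the deduction $N_{C/X}\cong\sO(1)^{\oplus(m-1)}\oplus\sO^{\oplus(m+1)}$ does not follow as stated. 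The paper avoids this entirely: triviality of $N_{Z/O}$ gives $N_{C/X}\cong\sO(a_1)\oplus\cdots\oplus\sO(a_{2m})$ with all $a_i\geq 0$; the contact condition $K_X^{-1}\cong L^{\otimes(m+1)}$ together with $C\cdot L=1$ gives $\sum_i a_i=m-1$; and Lemma \ref{l.char}, applied to the $(m-1)$-dimensional family of $S$-lines through $x$ whose tangent directions sweep out the $(m-1)$-dimensional germ of $S$ at $[w]$, then pins down $a_1=\cdots=a_{m-1}=1$ and $a_m=\cdots=a_{2m}=0$. You should replace your normal-bundle computation by this argument (which simultaneously shows that the $(3m-1)$-dimensional family $\{C_{\ell'}\}$ exhausts the Douady neighbourhood, so that $C$ is subordinate to $D$), or else actually prove the splitting of $\sN|_Z$ and analyse the embedding into $N_{Z/O}$, which is considerably harder.
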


For the proof, we recall the following characterization of unbendable rational curves.

\begin{lemma}\label{l.char}
Let $C \subset X$ be a nonsingular rational curve on a complex manifold of dimension $n$ whose normal bundle $N_{C/X}$ is isomorphic to $$\sO(a_1) \oplus \cdots \oplus \sO(a_{n-1})$$
for some nonnegative integers $a_1 \geq \cdots \geq a_{n-1}.$
Set $p := \sum_{i=1}^{n-1} a_i$. Suppose there is a $p$-dimensional family of deformations of $C$ fixing a point $x \in C$ such that the tangent directions to these deformations form a $p$-dimensional germ of a submanifold in $\BP T_x X$. Then  $$ a_1 = \cdots = a_{p} =1, \ a_{p+1} = \cdots = a_{n-1} =0.$$  \end{lemma}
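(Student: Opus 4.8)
The plan is to run the standard deformation-theoretic calculation for the tangent (VMRT) map and to reduce the splitting type of $N_{C/X}$ to the injectivity of a single jet homomorphism. Throughout write $N := N_{C/X} \cong \bigoplus_{i=1}^{n-1}\sO_{\BP^1}(a_i)$ and identify $C \cong \BP^1$. First I would pin down the relevant deformation spaces: the germ $\sK^C_x$ of deformations of $C$ fixing $x$ has tangent space at $[C]$ equal to $H^0(C, N(-x))$. Since every $a_i \ge 0$ we have $N(-x) \cong \bigoplus_i \sO_{\BP^1}(a_i-1)$ and $h^0(\sO_{\BP^1}(a_i-1)) = a_i$ for all $i$, so $\dim H^0(C,N(-x)) = \sum_i a_i = p$. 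This confirms that $\sK^C_x$ is smooth of dimension $p$, matching the hypothesis.

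Second, I would compute the differential of the tangent map $t\colon \sK^C_x \to \BP T_x X$, $[C'] \mapsto [\,T_x C'\,]$, at the base point $[C]$, which maps to $\BP T_x C$. Under the canonical identification $T_{\BP T_x C}\,\BP T_x X = \Hom(T_x C,\, T_x X/T_x C) = \Hom(T_x C, N_x)$, this differential is the jet homomorphism $\tau\colon H^0(C,N(-x)) \to \Hom(T_x C, N_x)$ sending a section $s$ (which vanishes at $x$, since $s \in H^0(N(-x))$) to its first derivative $(\nabla s)|_x$; the value is well defined independently of the connection precisely because $s(x)=0$. The kernel of $\tau$ is then exactly the space of sections vanishing to order $\ge 2$ at $x$, that is $\Ker \tau = H^0(C, N(-2x)) = \bigoplus_i H^0(\BP^1,\sO_{\BP^1}(a_i-2))$, and this is nonzero if and only if some $a_i \ge 2$.

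Third, I would use the hypothesis. It says that the image $\sC^C_x = t(\sK^C_x)$ is a germ of a $p$-dimensional submanifold at $\BP T_x C = t([C])$. Since $\dim \sK^C_x = p = \dim \sC^C_x$ and $t$ is dominant onto $\sC^C_x$, the map $t$ is generically finite, so $\mathrm{rank}\,\tau$ is generically $p$. Granting that $t$ is an immersion \emph{at the base point} $[C]$, equivalently that $\tau$ is injective, we get $H^0(C,N(-2x)) = \Ker\tau = 0$, hence $a_i \le 1$ for all $i$; combined with $\sum_i a_i = p$ and $a_1 \ge \cdots \ge a_{n-1}\ge 0$ this yields $a_1 = \cdots = a_p = 1$ and $a_{p+1} = \cdots = a_{n-1} = 0$, as claimed.

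The step I expect to be the main obstacle is exactly the injectivity of $\tau$, i.e. the immersivity of the tangent map at $[C]$. The subtlety is that $\dim \sC^C_x$ a priori only bounds $\mathrm{rank}\,\tau$ from \emph{below}, since $\mathrm{image}\,\tau \subseteq T_{\BP T_x C}\sC^C_x$; one must exclude the possibility that $t$ is ramified at $[C]$, a purely second-order spreading of tangent directions that could render $\sC^C_x$ a smooth $p$-dimensional germ even when $N_{C/X}$ is unbalanced (the local model being $(u,v)\mapsto(u,v^2)$, whose image germ is smooth of full dimension while its differential drops rank). The route I would take to settle this is to control the fibers of $t$ directly: the fiber through $[C]$ consists of deformations sharing the $1$-jet of $C$ at $x$, with tangent space $\Ker\tau = H^0(N(-2x))$, and if one shows these fibers have the expected dimension $h^0(N(-2x))$ with no ramification at $[C]$, then $\dim \sC^C_x = p - h^0(N(-2x)) = \#\{\,i : a_i \ge 1\,\} = \mathrm{rank}\,\tau$, and the hypothesis $\dim\sC^C_x = p$ forces $\mathrm{rank}\,\tau = p$. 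Establishing that no second-order contribution enlarges the image beyond $\mathrm{rank}\,\tau$ is where the genuine content of the characterization lies.
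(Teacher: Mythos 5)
Your proposal follows the paper's route step for step up to the decisive point: the identification $T_{[C]}\sK^C_x\cong H^0(C,N\otimes{\bf m}_x)$ of dimension $p$, the identification of the differential of the tangent map $\tau_x\colon\sK^C_x\to\BP T_xX$ with the $1$-jet homomorphism whose kernel is $H^0(C,N\otimes{\bf m}_x^2)$, and the observation that vanishing of this kernel forces $a_i\le 1$ for all $i$ and hence the stated splitting type. This is exactly the skeleton of the paper's proof, which invokes the argument of Proposition 1.4 of \cite{Hw01} for the equivalence ``$\tau_x$ is an immersion if and only if $H^0(C,N\otimes{\bf m}_x^2)=0$''.

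As a proof, however, your text stops short: the one step carrying all the content --- injectivity of the jet map $\tau$, i.e.\ immersivity of $\tau_x$ at the base point $[C]$ --- is only ``granted'' and then described as the main obstacle, with a fibre-dimension strategy that you do not carry out. That is a genuine gap, since without it you only get that the \emph{generic} rank of $d\tau_x$ is $p$, which is compatible with some $a_i\ge 2$. The paper closes this step in one line: since $\sK_x$ is smooth of dimension $p$ and, by hypothesis, its image is a nonsingular germ of the same dimension $p$, ``$\tau_x$ is an immersion''. You are right that this inference is not formal for an arbitrary holomorphic map between equidimensional smooth germs (your $(u,v)\mapsto(u,v^2)$ example is exactly the point), so the subtlety you isolate is real and the paper's justification is terse precisely there. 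Note that in the one place the lemma is applied (the proof of Theorem \ref{t.precise}) the tangent map is in addition injective --- an $S$-line through $x$ is determined by its tangent direction at $x$ --- and an injective holomorphic map between equidimensional complex manifolds is a biholomorphism onto its open image, which does force immersivity at $[C]$. So to make your argument complete you should either import that injectivity (available in the application) or actually execute the fibre analysis you sketch; as written, the crucial implication is assumed rather than proved.
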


\begin{proof}
 The space $\sK_x$ of all small deformations of $C$ in $X$ fixing $x$ is nonsingular of dimension $p$ because
$$H^0(C, N_{C/X} \otimes {\bf m}_x) = p \mbox{ and }
H^1(C, N_{C/X} \otimes {\bf m}_x) =0.$$
Let $\tau_x : \sK_x \to \BP T_x X$ be the morphism sending a curve to its tangent direction at $x$.  From our assumption,  the image of $\tau_x$ is nonsingular of dimension $p$, hence $\tau_x$ is an immersion.
It is easy to see (e.g. see the argument in the proof of Proposition 1.4 in \cite{Hw01}) that $\tau_x$ is an immersion if and only if
$$H^0(C, N_{C/X} \otimes {\bf m}_x^2) =0.$$ This implies  $ a_1 = \cdots = a_{p} =1, \ a_{p+1} = \cdots = a_{n-1} =0.$
 \end{proof}

\begin{proof}[Proof of Theorem \ref{t.precise}]
The left-action of $\BH$ on itself induces $\BH$-actions on both $\sS$ and $\sL^S$ such that the morphisms $\rho: \sS \to \sL^S$ and $\mu: \sS \to \BH$ are equivariant under the $\BH$-actions. The distributions $R$ and $Q$ are also $\BH$-equivariant and so is the commutative diagram
 $$\begin{array}{ccccccc}
\BP T^{\vee} \sS & \supset & \BP R^{\perp} & \stackrel{\eta}{\to} & \BP Q^{\perp} & \subset & \BP T^{\vee} \sL^S \\ \varpi_{\sS} \downarrow & &
\downarrow & & \downarrow \psi & & \downarrow \varpi_{\sL^S} \\
\sS & = & \sS & \stackrel{\rho}{\to} & \sL^S & = & \sL^S \end{array} $$
in the proof of Corollary \ref{c.N}.

Let $\btheta$ be the left-invariant 1-form on $\BH$ defining the distribution $\sW \subset T\BH$.  Let $\sigma: \sS \to \BP R^{\perp}$ be the section of the projection $\BP R^{\perp} \to \sS$ determined by $\mu^*{\btheta}$.
Define the morphism $\iota: \sS \to \BP Q^{\perp}$ by the composition $\eta \circ \sigma$. We have the $\BH$-equivariant commutative diagram
$$ \begin{array}{ccc}  \sS & \stackrel{\iota}{\to} & \BP Q^{\perp} \\
\rho \downarrow & & \downarrow \psi \\
\sL^S & = & \sL^S. \end{array} $$
  Lemma \ref{l.sigma} and (\ref{e.eta}) imply
\begin{equation}\label{e.theta} \iota^* (\vartheta^{\sL^S}|_{\BP Q^{\perp}}) = \mu^* {\btheta}. \end{equation}
Since ${\btheta}$ is a contact form on $\BH$ and $\dim T^{\mu}_{[\alpha]} = m-1$ for any $[\alpha] \in \sS,$ Lemma \ref{l.pullback} gives
$$\dim {\rm Null} (\dd_{[\alpha]} (\mu^* {\btheta})) = m-1.$$
As $ \dim {\rm Null}(\dd_{[\beta]}(\vartheta^{\sL^S}|_{\BP Q^{\perp}})) = m-1$ from Corollary \ref{c.N}, Lemma \ref{l.pullback} and (\ref{e.theta}) imply that $\iota$ must be an open immersion at every point of $\sS$.
   Considering the action  of the subgroup $A_{[v]} \subset \BH$ for $[v] \in S \subset \BP \W$ in Definition \ref{d.sL} and using the  $\BH$-equivariance, we see that the morphism $\iota$ embeds a fiber $\rho^{-1}(z)$ for any $z \in \sL^S$ with $\lambda (z) = [v] \in S$  into the projective line $\psi^{-1}(z)$ as an affine open set.   This implies that $\iota$ is an open embedding such that the complement $\BP Q^{\perp} \setminus \iota(\sS)$ is a section of the projection $\psi: \BP Q^{\perp} \to \sL^S$.

From Lemma \ref{l.reduction} and Corollary \ref{c.N}, we can choose a neighborhood $O$ of the fiber $C'$ of $\psi$ at the point $[\ell] \in \sL^S$ with a submersion $\nu: O \to X$ to a complex manifold $X$ with a contact structure $D \subset TX$ such that
$$T^{\nu}_{[\beta]} = {\rm Null}(\dd_{[\beta]}(\vartheta^{\sL^S}|_{\BP Q^{\perp}})) $$ for each $[\beta] \in O \subset \BP Q^{\perp}$ and
$$\nu^{-1} D = {\rm Ker}(\vartheta^{\sL^S}|_{\BP Q^{\perp}}).$$
Furthermore,  (\ref{e.theta}) implies that the images of a fiber of $\mu$ under $\iota$ intersects $O$ along a fiber of $\nu$, inducing an open embedding of a neighborhood $U$ of $\ell$ into $X$ such that  $D|_U = \sW|_U$.
Let $C \subset X$ be the image  $\nu (C')$.
Then $C$ is a nonsingular rational curve in $X$ and
the normal bundle of $C \subset X$ is isomorphic to $$\sO(a_1) \oplus \cdots \oplus \sO(a_{2m})$$
for some nonnegative integers $a_1 \geq \cdots \geq a_{2m}$
 because the normal bundle of $C'$ in  $O$ is trivial.
The degree of $C$ with respect to the line bundle $L:= TX/D$ is 1 because the line bundle $T \BP Q^{\perp}/{\rm Ker}(\vartheta^{\sL^S}|_{\BP Q^{\perp}})$ is isomorphic to  $\sO_{\BP Q^{\perp}}(1)$. As $D$ is a contact structure, we have $K_X^{-1} \cong L^{\otimes (m+1)}$ (e.g. by (2.2) of \cite{LB})
implying $\sum_{i=1}^{2m} a_i = m-1$. Thus the space of all small deformations of $C$ in $X$ has dimension equal to $$\dim H^0(\BP^1, \sO(a_1) \oplus \cdots \oplus \sO(a_{2m})) = \sum_{i=1}^{2m} a_i + 2m = 3m-1.$$ Since  the two morphisms $\sL^S \leftarrow \sS \to \BH$ describe the family of contact lines on $\BH$ effectively with $\dim \sL^S = 3m-1$ and are compatible with $\sL^S \leftarrow O \to X$ via $\iota$, we see that
compact fibers of $O \to \sL^S$ describe the $(3m-1)$-dimensional family of small deformations of the rational curve $C$ in $X.$
Moreover, the intersection of $U$ and the small deformations of $C$ fixing a point $x\in U \cap C$ correspond to an open subset in the family of $S$-lines through $x$. Thus their tangent directions at $x$ form a submanifold in $\BP T_x X$ projectively isomorphic to the germ of $S$ at $[w]$, where the  tangent direction $[T_x \ell] \in \sS$ of the affine line $\ell = U \cap C$ at $x$ is identified with $([w], x) \in \BP \W \times \BH$ via the inclusion $\sS \subset \BP \sW$ and the isomorphism (\ref{e.PW}). By Lemma \ref{l.char}, we see that $$a_1 = \cdots = a_{m-1} =1, \ a_{m} = \cdots = a_{2m} =0$$ and $C$ is unbendable with the VMRT $\sC^C_x$ projectively isomorphic to the germ $[w] \in S$.
\end{proof}

\section{Computation in coordinates on the Heisenberg group}\label{s.coordi}

In this section, we prove Propositions \ref{p.Q} and \ref{p.R} by computation in coordinates. Let us choose coordinates on $\BH$ as follows.

\begin{notation}\label{n.coordi}
 In Definition \ref{d.Heisenberg}, choose a  basis $\bw_1, \ldots, \bw_{2m}$ of $\W$ and a nonzero vector $\bi \in \I$ satisfying
$$\omega(\bw_j, \bw_k) = \omega(\bw_{m+j}, \bw_{m+k}) = 0 \mbox{ and }
\omega(\bw_j, \bw_{m+k}) = \delta_{j,k} \bi$$ for all $ 1 \leq j, k \leq m .$
Let $(x_1, \ldots, x_{2m}, x_{2m+1})$ be the dual coordinates of the basis $\{ \bw_1, \ldots, \bw_{2m}, \bi\}$ of the vector space $\W \oplus \I$ and view them as coordinates on the affine variety underlying $\BH$. \end{notation}

The following is straightforward.

\begin{lemma}\label{l.coordi} In terms of the coordinates in Notation \ref{n.coordi},
the multiplication $(x_1, \ldots, x_{2m+1}) \cdot (c_1, \ldots, c_{2m+1})$ of the two elements    $$ (x_1, \ldots, x_{2m+1}) \mbox{ and } (c_1, \ldots, c_{2m+1}) \in \BH$$ is given by   $$(x_1+ c_1, \ldots, x_{2m} + c_{2m}, x_{2m+1} + c_{2m+1} + \frac{1}{2} \sum_{j=1}^m(c_{m+j} x_{j} - c_{j} x_{m+j}))$$ and
a  left-invariant 1-form ${\btheta}$ on $\BH$ can be chosen as $${\btheta} = \sum_{j=1}^m (x_j {\rm d} x_{m+j} - x_{m+j} {\rm d} x_j) \ - 2 \ {\rm d} x_{2m+1}.$$
    \end{lemma}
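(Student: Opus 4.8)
The plan is to verify both assertions by direct substitution into the definitions of Definition \ref{d.Heisenberg}, since each reduces to unwinding the symplectic pairing in the chosen basis of Notation \ref{n.coordi}. I would treat the multiplication formula first and then use it to check the 1-form, because the pullback of $\btheta$ under left translation requires the explicit coordinate description of the group law.

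For the multiplication, I write the two factors as $(w,z)$ and $(w',z')$ with $w = \sum_{i=1}^{2m} x_i \bw_i$, $z = x_{2m+1}\bi$ and $w' = \sum_{i=1}^{2m} c_i \bw_i$, $z' = c_{2m+1}\bi$. The first $2m$ coordinates of the product are immediate from $w+w'$, so the only content is the $\I$-component $z+z'+\frac12\omega(w,w')$. Expanding $\omega(w,w') = \sum_{i,k} x_i c_k\,\omega(\bw_i,\bw_k)$ and inserting the pairing relations $\omega(\bw_j,\bw_k)=\omega(\bw_{m+j},\bw_{m+k})=0$ and $\omega(\bw_j,\bw_{m+k})=\delta_{j,k}\bi$ (so that $\omega(\bw_{m+k},\bw_j)=-\delta_{j,k}\bi$) collapses the double sum to $\bi\sum_{j=1}^m(x_j c_{m+j}-x_{m+j}c_j)$; dividing by two reproduces the stated last coordinate.

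For the 1-form I would check two things: that $\btheta$ is left-invariant and that ${\rm Ker}(\btheta)=\sW$. Evaluating at the identity $o$ annihilates all the $x_j$-terms, leaving $\btheta_o = -2\,{\rm d}x_{2m+1}$, whose kernel is exactly $\mathrm{span}(\bw_1,\ldots,\bw_{2m})=\W=\sW_o$. For invariance I would read off left translation $L_g$ by $g=(c_1,\ldots,c_{2m+1})$ from the multiplication formula just proved: $x_i\mapsto c_i+x_i$ for $i\le 2m$ and $x_{2m+1}\mapsto c_{2m+1}+x_{2m+1}+\frac12\sum_j(c_j x_{m+j}-c_{m+j}x_j)$. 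Then ${\rm d}(L_g^*x_i)={\rm d}x_i$ for $i\le 2m$ while ${\rm d}(L_g^*x_{2m+1})={\rm d}x_{2m+1}+\frac12\sum_j(c_j\,{\rm d}x_{m+j}-c_{m+j}\,{\rm d}x_j)$; substituting these into $\btheta$ produces a single extra $c$-dependent term $\sum_j(c_j\,{\rm d}x_{m+j}-c_{m+j}\,{\rm d}x_j)$ from the constants in the first sum, which is precisely cancelled by the contribution $-2\cdot\frac12\sum_j(c_j\,{\rm d}x_{m+j}-c_{m+j}\,{\rm d}x_j)$ coming from the last term. Hence $L_g^*\btheta=\btheta$, so $\btheta$ is left-invariant and cuts out $\sW$.

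There is no real obstacle; the computation is entirely mechanical. The one point demanding care is sign bookkeeping in the symplectic pairing, namely $\omega(\bw_{m+k},\bw_j)=-\delta_{j,k}\bi$ and the matching sign in the term $\frac12\sum_j(c_{m+j}x_j-c_j x_{m+j})$ of the group law. These signs must be tracked consistently through both parts so that the $c$-dependent terms in $L_g^*\btheta$ cancel exactly; an error here would make $\btheta$ fail to be invariant, so this is where I would double-check.
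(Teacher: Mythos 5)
Your proposal is correct and is exactly the direct computation the paper has in mind (the paper states this lemma without proof, labelling it straightforward): the expansion of $\omega(w,w')$ in the symplectic basis gives the group law, and the verification that $L_g^*\btheta=\btheta$ together with $\btheta_o=-2\,{\rm d}x_{2m+1}$ having kernel $\W$ establishes the claim about the $1$-form. The sign bookkeeping you flag, including the cancellation that forces the coefficient $-2$, is handled correctly.
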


\begin{proposition}\label{p.vecf}
In terms of the coordinates $x_1, \ldots, x_{2m+1}$ of Notation \ref{n.coordi}, the 1-forms ${\rm d} x_1, \ldots, {\rm d} x_{2m}$ determine regular functions on $\sW \subset T\BH$ denoted by $\lambda_1, \ldots, \lambda_{2m}$ such that $$(x_1, \ldots, x_{2m+1}, \lambda_1, \ldots, \lambda_{2m})$$ give affine coordinates on the nonsingular variety $\sW$.
\begin{itemize} \item[(i)]
Define the vector field on $\sW$
$$F := \sum_{j=1}^m (\lambda_j \frac{\p}{\p x_j} + \lambda_{m+j} \frac{\p}{\p x_{m+j}}) + \frac{1}{2} \sum_{j=1}^m (\lambda_{m+j} x_j - \lambda_j x_{m+j}) \frac{\p}{\p x_{2m+1}}.$$
Then  the natural projection $\sW \setminus (0-\mbox{section}) \to \BP \sW$ sends the line subbundle spanned by $F$ in $T\sW$ to $\sF \subset T
\BP \sW$ in Definition \ref{d.line}. In particular, the fibers of $\varrho$ in Definition \ref{d.sL} correspond to  the leaves of the vector field $F$.
\item[(ii)] The 1-form
$$\sum_{j=1}^m (\lambda_j {\rm d} \lambda_{m+j} - \lambda_{m+j} {\rm d} \lambda_j)$$ on $\sW$ annihilates the tangent vectors of the submanifold $\sS^+ \subset \sW$ of Definition \ref{d.Legendrian}.
\item[(iii)]  Define a 1-form $\zeta$ on $\sW$ by $$\zeta:=  \sum_{j=1}^m (\lambda_j  {\rm d} x_{m+j} - \lambda_{m+j} {\rm d} x_j)$$  and
a 1-form $\theta$ on $\sS^+$ by       $$ \theta : =  (\mu \circ \chi)^* \btheta,$$  where $\chi: \sS^+ \to \sS$ and $\mu: \sS \to \BH$ are  as in Definition \ref{d.Legendrian}.  Then the distribution $\chi^{-1}R$ of corank 2 on $\sS^+$ is defined by the two point-wise independent 1-forms
    $\theta$ and $\zeta|_{\sS^+}$.
\end{itemize}
\end{proposition}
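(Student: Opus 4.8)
The plan is to fix explicit coordinate formulas once and verify the three assertions by direct computation. First I would record the left-invariant vector fields. Using the group law of Lemma~\ref{l.coordi}, the left-invariant extension of the basis vector $\bw_i \in \W$ is
$$\bar{\bw}_j = \frac{\p}{\p x_j} - \tfrac12 x_{m+j}\frac{\p}{\p x_{2m+1}}, \qquad \bar{\bw}_{m+j} = \frac{\p}{\p x_{m+j}} + \tfrac12 x_j \frac{\p}{\p x_{2m+1}} \qquad (1\le j\le m),$$
and one checks immediately that each lies in $\ker(\btheta)=\sW$. Consequently the left-invariant field whose value at $o$ is $\sum_i\lambda_i\bw_i$ is exactly $F$; equivalently, the bundle projection $\sW\to\BH$ carries $F|_{(x,\lambda)}$ to the tautological vector $\alpha\in\sW_x\subset T_x\BH$ attached to $(x,\lambda)$. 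I would also compute ${\rm d}\btheta = 2\sum_{j=1}^m {\rm d}x_j\wedge{\rm d}x_{m+j}$ from the formula for $\btheta$; this is the only exterior derivative needed and feeds directly into (iii).

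For (i), the defining curve of $\ell^w_x$ in Definition~\ref{d.line} is precisely the integral curve through $x$ of the left-invariant field $\bar w$, $w=\sum\lambda_i\bw_i$. Its tautological lift to $\sW$ keeps the direction $\lambda=w$ constant while the base point runs along the line, so the lift is an integral curve of $F$; the $t$-dependent terms in the $\p/\p x_{2m+1}$-component cancel because $\omega(w,w)=0$. Projectivizing in the fibers of $\sW\setminus(\text{0-section})\to\BP\sW$ therefore carries the line field $\C\cdot F$ onto the tangent directions of lifted contact lines, which is $\sF$. Since the fibers of $\varrho$ are by construction the tangent directions along a single contact line, they coincide with the leaves of $F$.

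For (ii), I would observe that $\sum_j(\lambda_j\,{\rm d}\lambda_{m+j}-\lambda_{m+j}\,{\rm d}\lambda_j)$ is the pullback, under the left-invariant fiber projection $\sW\to\W$, of the contraction $\iota_E\omega$ of the constant form $\omega=\sum_j {\rm d}\lambda_j\wedge{\rm d}\lambda_{m+j}$ by the Euler field $E=\sum_i\lambda_i\,\p/\p\lambda_i$; its value at $\alpha$ on $\beta$ is $\omega(\alpha,\beta)$. Tangent vectors of $\sS^+$ project into the affine tangent spaces $T_\alpha S^+$, so on $\sS^+$ the form evaluates to $\omega(\alpha,\beta)$ with $\beta\in T_\alpha S^+$, which vanishes by the Legendrian condition $\omega(\alpha,T_\alpha S^+)=0$ of Definition~\ref{d.Legendrian}.

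For (iii), the two conditions defining $R$ in Definition~\ref{d.R} pull back cleanly. The membership $v\in\mu^{-1}\sW$ is $\btheta({\rm d}\mu\,v)=0$, which on $\sS^+$ is exactly $\theta=(\mu\circ\chi)^*\btheta$ vanishing. For the second condition, writing $a_i := {\rm d}x_i({\rm d}\mu\,v)$ and using ${\rm d}x_j(\alpha)=\lambda_j$ together with ${\rm d}\btheta=2\sum_j {\rm d}x_j\wedge {\rm d}x_{m+j}$, I obtain
$${\rm d}\btheta({\rm d}\mu\,v,\alpha) = 2\sum_{j=1}^m(a_j\lambda_{m+j}-a_{m+j}\lambda_j) = -2\,\zeta|_{\sS^+}(\tilde v),$$
where $\tilde v$ is any $\chi$-lift of $v$; here I use that $\mu\circ\chi$ is the projection $(x,\lambda)\mapsto x$, so ${\rm d}x_i(\tilde v)=a_i$, and that both ${\rm d}x_i$ and $\zeta$ annihilate $T^\chi$ so the value is independent of the lift. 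Hence $R$'s second condition pulls back to $\zeta|_{\sS^+}=0$, giving $\chi^{-1}R=\ker\theta\cap\ker(\zeta|_{\sS^+})$; since $R$ has corank $2$ and $\chi$ is a submersion, $\chi^{-1}R$ has corank $2$, which forces $\theta$ and $\zeta|_{\sS^+}$ to be pointwise independent. The main obstacle is precisely this last computation: correctly identifying the tautological vector $\alpha$ as an element of $\sW_x\subset T_x\BH$ through the left-invariant trivialization, and tracking the coordinate components through the projections $\chi$ and $\mu$, after which the identification with $\zeta$ is forced by the formula for ${\rm d}\btheta$.
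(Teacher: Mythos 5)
Your proposal is correct and follows essentially the same route as the paper: both are direct coordinate computations that check the tautological lift of a $w$-line is an integral curve of $F$ (using $\omega(w,w)=0$ to cancel the $t$-dependent terms), reduce (ii) to the Legendrian condition via the left-invariance of the $\lambda_i$, and identify the second defining equation of $R$ with $\zeta=0$ through ${\rm d}\btheta = 2\sum_j {\rm d}x_j\wedge{\rm d}x_{m+j}$. The only cosmetic difference is that you write out the left-invariant frame $\bar{\bw}_i$ and the identification $F=\sum_i\lambda_i\bar{\bw}_i$ explicitly, which the paper leaves implicit.
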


\begin{proof}
For any point $w= (c_1, \ldots, c_{2m}) \in \W$,
the $w$-line through a point $(x_1, \ldots, x_{2m+1}) \in \BH$  is given by
$$(x_1+ t c_1, \ldots, x_{2m}+ t c_{2m}, x_{2m+1} + \frac{t}{2} \sum_{j=1}^n (c_{m+j} x_j - c_j x_{m+j}))$$ with $t \in \C$
by Lemma \ref{l.coordi}. The tangent vector field $\frac{{\rm d}}{{\rm d} t}$ along this $w$-line describe a curve in $\sW$ with $\lambda_1 =c_1, \ldots, \lambda_{2m} = c_{2m}$ taking constant values. It is straightforward to see that the vector field $F$ is tangent to this curve in $\sW$.  It follows that leaves of $F$ give leaves of $\sF$, proving (i).

The functions $\lambda_1, \ldots, \lambda_{2m}$ are invariant under the left $\BH$-action on $\sW$. Thus the germ of $\sS^+$ as a local subvariety in $\sW$ is defined by local analytic functions in $\lambda_1, \ldots, \lambda_{2m}$, which do not depend on the coordinates $x_1, \ldots, x_{2m+1}.$ Thus to prove (ii), it suffices to show that the expression  $$\sum_{j=1}^m (\lambda_j {\rm d} \lambda_{m+j} - \lambda_{m+j} {\rm d} \lambda_j)$$ regarded as a 1-form on $\W = \sW_o$ annihilates the tangent vectors of  $S^+ \subset \W = \sW_o.$ Since $$\omega = \sum_{j=1}^m {\rm d} \lambda_j \wedge {\rm d} \lambda_{m+j}$$ on $\sW_o = \W$, this follows from Definition \ref{d.Legendrian} (1).

To check (iii), note that the distribution $(\mu \circ \chi)^{-1} \sW$ is defined by $\theta$ and the subbundle $\chi^{-1}R \subset (\mu \circ \chi)^{-1} \sW$  is defined by one additional condition ${\rm d} \theta (F, \cdot) =0.$ The last equation is reduced to $\zeta =0$ from the expression of $\btheta$ in Lemma \ref{l.coordi} and the expression of $F$ in (i).
\end{proof}

Now we are ready to give proofs of Propositions \ref{p.Q} and \ref{p.R}.

\begin{proof}[Proof of \ref{p.Q}]
As the vector field $F$ is tangent to $\sS^+$, to prove Proposition \ref{p.Q}, it suffices to show that $[F, \chi^{-1}R] \subset \chi^{-1}R$ by Lemma \ref{l.quotient}. The Lie derivatives of the 1-forms on $\sW$ in Proposition \ref{p.vecf} (iii) by the vector field $F$ in Proposition \ref{p.vecf}(i) are  $${\rm Lie}_F  \theta = {\rm d} \theta (F, \cdot) = 2 \zeta$$ and $${\rm Lie}_F \zeta = {\rm d} \zeta (F, \cdot) = \sum_{j=1}^m (\lambda_j {\rm d} \lambda_{m+j} - \lambda_{m+j} {\rm d} \lambda_j),$$  because $\theta(F) \equiv 0 \equiv \zeta(F)$.  The last expression is zero on $\sS^+$ by Proposition \ref{p.vecf} (ii).  As $\chi^{-1} R$ is defined by $\theta = \zeta =0$ on $\sS^+$, this shows that $[F, \chi^{-1} R] \subset \chi^{-1} R.$ \end{proof}

\begin{proof}[Proof of Proposition \ref{p.R}]
Pick a point $z \in \sS^+ \subset \sW$ with $\chi(z) = y$.
As $\chi: \sS^+ \to \sS$ has 1-dimensional fibers, it suffices by Lemma \ref{l.pullback} to show that for any nonzero $\beta \in (\chi^{-1}R)^{\perp}$,
\begin{equation}\label{e.perp}
\dim {\rm Null}(\beta \circ {\rm Levi}_z^{\chi^{-1}R}) = m+1. \end{equation}

By the $\BH$-action, we may assume that $\mu(y) = o$.  Using the fact that the symplectic group of $(\W, \omega)$ acts transitively on the set of Lagrangian subspaces of $\W,$ we can choose the affine coordinates on $\sW$ $$(x_1, \ldots, x_{2m+1}, \lambda_1, \ldots, \lambda_{2m})$$
in Proposition \ref{p.vecf} such that  $z$ is given by $$x_1 = \cdots = x_{2m+1} =  \lambda_1 = \cdots = \lambda_{m-1} = \lambda_{m+1} = \cdots = \lambda_{2m} =0, \ \lambda_m =1$$ and  the tangent space of $\sS^+  $ at $z$  is given by $$\lambda_{m+1} = \cdots = \lambda_{2m} =0.$$

Note that for any nonzero $\beta \in (\chi^{-1}R)_z^{\perp}$, we have \begin{equation}\label{e.beta} \beta \circ {\rm Levi}_z^{\chi^{-1}R} = {\rm d} \tilde{\beta} |_{\wedge^2 (\chi^{-1} R)_z}, \end{equation} where $\tilde{\beta}$ is a local 1-form on $\sS^+$ with $\tilde{\beta}(z) = \beta$.
From Proposition \ref{p.vecf} (iii), we have \begin{equation}\label{e.langle} (\chi^{-1}R)_z = \langle \frac{\p}{\p x_1}, \ldots, \frac{\p}{\p x_{2m-1}}, \frac{\p}{\p \lambda_1}, \ldots, \frac{\p}{\p \lambda_m} \rangle. \end{equation}

Applying (\ref{e.beta}) to $\beta = \theta_z \in (\chi^{-1} R)^{\perp}_z$ for $\theta$ in Proposition \ref{p.vecf} (iii), we have  \begin{eqnarray*} {\rm Null} (\theta_z \circ {\rm Levi}_z^{\chi^{-1} R}) & = & {\rm Null}(\sum_{j=1}^m {\rm d} x_j \wedge {\rm d} x_{m+j}|_{\wedge^2 (\chi^{-1} R)_z}) \\
& = & {\rm Null}(\sum_{j=1}^{m-1} {\rm d} x_j \wedge {\rm d} x_{m+j}|_{\wedge^2 (\chi^{-1} R)_z}). \end{eqnarray*} It follows that $${\rm Null}(\theta_z \circ {\rm Levi}_z^{\chi^{-1}R}) = \langle \frac{\p}{\p x_m}, \frac{\p}{\p \lambda_1}, \ldots, \frac{\p}{\p \lambda_{m}} \rangle,$$ which has dimension $m+1$. This verifies (\ref{e.perp}) for $\beta = \theta_z$.

Next, applying (\ref{e.beta}) to $\beta = (\zeta + c \theta)_z \in (\chi^{-1} R)_z^{\perp}$ with any $c \in \C$, we have $${\rm Null}((\zeta + c \theta)_z \circ {\rm Levi}_z^{\chi^{-1}R}) = {\rm Null}({\rm d} \zeta + c \ {\rm d} \theta)|_{\wedge^2 (\chi^{-1}R)_z}.$$ Since
${\rm d} \zeta + c \ {\rm d} \theta$ is  $$2 c \sum_{j=1}^m {\rm d} x_j \wedge {\rm d} x_{m+j} + \sum_{j=1}^m ({\rm d} \lambda_j \wedge {\rm d} x_{m+j} - {\rm d} \lambda_{m+j} \wedge {\rm d} x_j),$$ its restriction to (\ref{e.langle}) is
 $$2 c \sum_{j=1}^{m-1} {\rm d} x_j \wedge {\rm d} x_{m+j} + \sum_{j=1}^{m-1} {\rm d} \lambda_j \wedge {\rm d} x_{m+j}.$$ Its
 null space on (\ref{e.langle})
  has a basis given by $$\frac{\p}{\p x_m}, \frac{\p}{\p \lambda_m} \mbox{ and } \frac{\p}{\p x_{j}} - 2 c \frac{\p}{\p \lambda_j} \mbox{ for } 1 \leq j \leq m-1.$$ Thus it has dimension $m+1$, verifying (\ref{e.perp}) for $\beta= (\zeta + c \theta)_z$ for any $c \in \C$. This completes the proof of (\ref{e.perp}).
\end{proof}

\bigskip
{\bf Acknowledgment}
I would like to thank Qifeng Li for valuable comments on the first draft of the paper and the referee for helpful suggestions to improve the presentation.

\bigskip
Institute for Basic Science

Center for Complex Geometry

Daejeon, 34126

Republic of Korea

jmhwang@ibs.re.kr

\begin{thebibliography}{6}

\bibitem[AS]{AS} Agrachev, A., Sachkov, Y.: {\em Control theory from the geometric viewpoint}. Encycl. Math. Sci. vol. 87, Springer-Verlag, Berlin Heidelberg, 2004.


\bibitem[Hw01]{Hw01} Hwang, J.-M.: Geometry of minimal rational curves on Fano manifolds. in {\em School on Vanishing Theorems and Effective Results in Algebraic Geometry. Trieste, 2000},  ICTP Lect. Notes, {\bf 6}, 335-393, Abdus Salam Int. Cent. Theoret. Phys., Trieste, 2001.


\bibitem[Hw12]{Hw12}
Hwang, J.-M.:  Geometry of varieties of minimal rational tangents. in {\em Current developments in algebraic geometry},  Math. Sci. Res. Inst. Publ., {\bf 59}, 197–226, Cambridge Univ. Press, Cambridge, 2012

\bibitem[HwMa]{HwMa} Hwang, J.-M., Manivel, L.: Quasi-complete homogeneous contact  manifold
associated to a cubic form. in {\em Vector bundles and complex
geometry, in honor of S. Ramanan's 70th birthday}, Contemporary Mathematics {\bf 522} (2010)  105-112,
American Math. Soc.

\bibitem[HwMo]{HM98} Hwang, J.-M., Mok, N.:
Rigidity of irreducible Hermitian symmetric
spaces of the compact type under K\"ahler deformation. Invent.
math. {\bf 131} (1998) 393-418.



\bibitem[LB]{LB} LeBrun C.: Fano manifolds, contact structures, and quaternionic geometry. Internat. J. Math. {\bf 6} (1995) 419-437

\bibitem[Ze]{Z99} Zelenko, I.: Nonregular abnormal extremals of 2-distribution: existence, second variation, and rigidity. J. Dynam. Control Syst. {\bf 5} (1999) 347-383.


\end{thebibliography}
\end{document}